\documentclass{amsart}
\usepackage{amssymb}
\usepackage{amsthm}
\usepackage{graphicx}
\usepackage{enumitem}
\usepackage[maxnames=5, minnames=5]{biblatex}

\theoremstyle{definition}
\newtheorem{conj}{Conjecture}
\newtheorem{prop}{Proposition}
\newtheorem{lem}{Lemma}

\newtheorem{rem}{Remark}
\newtheorem{thm}{Theorem}

\newtheorem{definition}{Definition}

\DeclareMathOperator{\diam}{diam}

\DeclareMathOperator{\Vol}{Vol}

\DeclareMathOperator{\UW}{UW}

\DeclareMathOperator{\Sc}{Sc}
\DeclareMathOperator{\FillRad}{FillRad}

\title{Macroscopic Scalar Curvature in High Dimensions}
\author{Brendan Isley}
\date{}

\bibliography{bibliography}

\begin{document}

\maketitle
\begin{abstract}
    We describe a version of positive macroscopic scalar curvature motivated by the work of Alpert, Balitskiy, and Guth, and prove that this condition on a manifold implies a bound on its $1$-width in terms of its first Betti number. A key tool in the proof is a decomposition of any closed manifold into a family of chains with convenient combinatorial structure, which was inspired by Nabutovsky, Rotman, and Sabourau's work on sweepouts. Finally, using techniques developed by Chodosh, Li, and Liokumovich, we show that for a sufficiently connected manifold, if the universal cover satisfies this curvature condition, then the manifold has a finite cover homotopy equivalent to $S^n$ or connected sums of $S^{n-1} \times S^1$.
\end{abstract}

\section{Introduction}

\subsection{Background}
Given a Riemannian manifold $M^n$, the \emph{scalar curvature} at a point $x \in M$ is the unique real number $\Sc_M(x)$ which satisfies the following formula:
\begin{equation}\label{sc}
    \Vol(B(x,r)) = \omega_n r^n \left( 1 - \frac{\Sc_M(x)}{6(n+2)}r^2 + O(r^3)\right),
\end{equation}
where $B(x,r)$ is the ball of radius $r$ centered at $x$, and $\omega_n$ is the volume of a unit ball in $\mathbb{R}^n$. Equivalently, $\Sc_M(x)$ equals the trace of the Ricci tensor at $x$.

It is conjectured that manifolds with positive scalar curvature have a particular type of macroscopic geometry. The natural tool, in this context, to describe macroscopic geometry is through the notions of Urysohn width and macroscopic dimension. Given a metric space $X$ and a simplicial complex $Y$, the width of a map $f: X \to Y$ is the supremal diameter of its fibers, i.e. $\sup_{y\in Y}\diam_X(f^{-1}(y))$.
The \emph{$q$-dimensional Urysohn Width} (also referred to as the \emph{$q$-width}), denoted $\UW_q(X)$, is the infimal width over maps from $X$ into any $q$-dimensional complex. Finally, the \emph{macroscopic dimension} of $X$, denoted $\dim_{mc}(X)$, is the minimal $q$ such that $\UW_q(X)$ is finite. Macroscopic dimension roughly describes the dimension of a space ``from a distance." For instance, the macroscopic dimension of the sphere $S^2$ is zero, of the cylinder $S^1 \times \mathbb{R}$ is one, and of the plane $\mathbb{R}^2$ is two.

The hypothesized connection between positive scalar curvature and macroscopic geometry is described by an unresolved conjecture of Gromov.
\begin{conj}[Gromov 1988, \cite{gromov1988width, gromov1996positive}]\label{psc-conj}
    There exists a dimensional constant $c_n$ such that the following holds. Suppose $M$ is an $n$-dimensional manifold, and suppose there exists a positive constant $s$ satisfying $\Sc_M(x) \geq s^2 > 0$ for all $x \in M$. Then the $(n-2)$-dimensional Urysohn width of $M$ satisfies
    $$
    \UW_{n-2}(M) \leq \frac{c_n}{s}.
    $$
    In particular, the macroscopic dimension of $M$ is at most $n-2$.
\end{conj}

The $2$-dimensional case of this conjecture is a direct consequence of the Bonnet-Myers theorem, and the $3$-dimensional case has also been proven in \cite{gromov1983positive,liokumovich2023waist,chodosh2024generalized}. All other dimensions are unresolved. Chodosh, Li, and Liokumovich provided partial progress towards higher dimensional cases in \cite{chodosh2023classifying}, where they show that a class of $4$ and $5$-dimensional manifolds with positive scalar curvature and restricted topology have macroscopic dimension bounded above by $1$.

Equation (\ref{sc}) says that for spaces with positive scalar curvature, the volumes of \emph{infinitesimal} balls are less than the volume of Euclidean balls. We could  instead require a bound on the volume of balls of a \emph{fixed} radius, which is a macroscopic curvature condition. We want to know whether the macroscopic geometry of manifolds with ``positive macroscopic scalar curvature'' is similar to that of manifolds with positive scalar curvature.

Gromov conjectured the following result in \cite{gromov2006large}, which was proven by Larry Guth in \cite{guth2017volumes}. Later proofs were given in \cite{papasoglu2020uryson} and \cite{liokumovich2022filling}.
\begin{thm}[Guth 2017, \cite{guth2017volumes}]
    There exists a dimensional constant $c(n)$ such that the following holds. If $M$ is a closed $n$-dimensional manifold and $r>0$ is a constant such that all balls of radius $r$ in $M$ have volume at most $c(n)\,r^n$, then $\UW_{n-1}(M) \leq r$.
\end{thm}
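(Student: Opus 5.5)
We prove Guth's theorem by the Papasoglu-style induction: cut $M$ into pieces of small diameter along hypersurfaces of small area, and build the map to an $(n-1)$-complex from the nerve of the resulting cover. Throughout, $\mathcal{H}^{n-1}$ denotes $(n-1)$-dimensional Hausdorff measure.

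The proof has three steps.

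\emph{Step 1 (local cutting).} Fix $x \in M$. By the coarea inequality, $\int_{r/2}^{r} \mathcal{H}^{n-1}(\partial B(x,\rho))\,d\rho \le \Vol B(x,r) \le c(n) r^n$. Hence there is a radius $\rho \in [r/2, r]$ with
\[
\mathcal{H}^{n-1}(\partial B(x,\rho)) \le 2c(n) r^{n-1}.
\]
So every point has a ball of radius comparable to $r$ whose boundary is a cycle of small area.

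\emph{Step 2 (induction on dimension).} We want $\partial B(x,\rho)$, or a cleaner hypersurface $Z$ near it, to have $\UW_{n-2}(Z) \le \epsilon r$ with $\epsilon$ small. Intrinsic volume of balls in $Z$ is not controlled directly, so we do not apply the inductive hypothesis to $\partial B(x,\rho)$ as it stands. Instead, following Papasoglu, we choose the cutting surface $Z$ to minimize $(n-1)$-area among separating surfaces of a fixed thickened region. Minimality gives a lower bound on the area of $Z$ inside each ball $B(z,s)$ for $z \in Z$: otherwise we could replace $Z \cap B(z,s)$ by a piece of the sphere $\partial B(z,s')$, whose area is small by the coarea bound from Step 1 at scale $s$. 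This is the standard isoperimetric/monotonicity trick, and it turns the hypothesis ``area is small'' into ``all balls in $Z$ have small $(n-1)$-volume relative to their radius.'' With that in hand, the inductive hypothesis in dimension $n-1$, for metric spaces or Hausdorff-measure versions, applies to $Z$ with radius $\epsilon r$. This requires formulating the theorem for general compact metric spaces with Hausdorff content rather than for Riemannian manifolds, as Papasoglu and Liokumovich--Lishak--Nabutovsky--Rotman do. I would state and prove it in that generality.

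\emph{Step 3 (gluing).} Cover $M$ by finitely many balls $B(x_i,\rho_i)$. Each ball maps to a cone point, and each cutting surface $Z_i$ maps via its inductive $(n-2)$-dimensional map, coned off, to an $(n-1)$-complex. Choosing the $Z_i$ successively inside the complement of earlier regions, which is why we need separating minimizers in subdomains, keeps the overlaps controlled. The fibers then have diameter at most $2\rho_i + \epsilon r \le C r$. Rescaling $c(n)$ converts this into $\UW_{n-1}(M) \le r$.

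The main obstacle is Step 2: making the area-minimizing cut exist in the Hausdorff-content setting, and proving the local volume bound on $Z$ at all scales below $r$ so that the induction closes. I would handle it with Hausdorff content instead of measure, which makes minimizers unnecessary: near-minimizers suffice. The base case $n=1$ is trivial.
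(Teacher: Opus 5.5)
The paper gives no proof of this statement: it is quoted from Guth, with Papasoglu and Liokumovich--Lishak--Nabutovsky--Rotman cited for later proofs. So your proposal has to stand on its own. Its overall shape is Papasoglu's: cut along small-area sets, apply induction to the cut, glue via a nerve. The way you organize the cuts, however, leaves a genuine gap between Steps 2 and 3.

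You minimize separately in each region around $x_i$ (successively in the complement of earlier regions), apply the inductive hypothesis to each $Z_i$ separately, and cone off separately. This does not yield a map on $M$. The $Z_i$ meet one another and the separate $(n-2)$-dimensional maps do not agree there. Moreover, a union of sets each of small $(n-2)$-width need not have small $(n-2)$-width, because multiplicities of covers add. So the induction must be applied to $Z=\bigcup_i Z_i$, which needs a bound on $\mathcal{H}^{n-1}(Z\cap B(z,s))$. Per-region minimality only bounds each $\mathcal{H}^{n-1}(Z_i\cap B(z,s))$ by a sphere area, and summing these fails for two reasons:
\begin{itemize}
\item It needs a bound on the number of regions meeting $B(z,s)$, which small volume does not provide. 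A tiny core with many long thin tentacles has small-volume $r$-balls, yet arbitrarily many $r$-separated points within distance $2r$ of the core.
\item Let $\Omega_i$ be the complement of the earlier pieces. Whenever $B(z,s)$ meets the part that $U_i$ is forced to contain, the only admissible competitor enlarges $U_i$. The sphere pieces it adds, $S(z,s)\cap\Omega_i\setminus U_i$, are nested in $i$ rather than disjoint, so they do not sum to a single sphere.
\end{itemize}
``Keeps the overlaps controlled'' is exactly where an argument is missing. Also, minimality gives an \emph{upper} bound on $\mathcal{H}^{n-1}(Z\cap B(z,s))$, not a lower one.

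The repair is to minimize globally, which is how Papasoglu's argument runs.
\begin{itemize}
\item Let $\mathcal{Z}$ be the closed $Z\subset M$ such that $M\setminus Z$ is a disjoint union of open sets of diameter $<r$. Spheres of generic radii around a finite net give an element of $\mathcal{Z}$ with finite $\mathcal{H}^{n-1}$. Take $Z\in\mathcal{Z}$ with $\mathcal{H}^{n-1}(Z)\le\inf_{\mathcal{Z}}\mathcal{H}^{n-1}+\delta$.
\item For every $z$ and $s<r/2$, the set $(Z\setminus B(z,s))\cup S(z,s)$ is automatically in $\mathcal{Z}$, since its new pieces are $B(z,s)$ and subsets of old pieces. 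Additivity of $\mathcal{H}^{n-1}$ then gives $\mathcal{H}^{n-1}(Z\cap B(z,s))\le\mathcal{H}^{n-1}(S(z,s))+\delta$ for the whole cut set at once.
\item Coarea on $[r/5,2r/5]$ yields such an $s$ with $\mathcal{H}^{n-1}(S(z,s))\le 5c(n)r^{n-1}$. Hence $\mathcal{H}^{n-1}(Z\cap B(z,r/5))\le c(n-1)(r/5)^{n-1}$ once $c(n)\le\frac12 5^{-n}c(n-1)$ and $\delta$ is small. Only this one scale is needed, not all scales below $r$.
\item The inductive hypothesis, stated for compact metric spaces with Hausdorff measure, gives $\UW_{n-2}(Z)\le r/5$. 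In that setting Eilenberg's inequality replaces coarea, at the cost of a dimensional constant.
\item Take a multiplicity-$(n-1)$ cover of $Z$ by small sets, thickened slightly into $M$, together with the open pieces of $M\setminus Z$. This is a cover of multiplicity $\le n$ and mesh $<r$, and its nerve map gives $\UW_{n-1}(M)\le r$.
\end{itemize}
Finally, Hausdorff content does not help with Step 2. Near-minimizers already suffice for measure, and the cut-and-paste inequality uses additivity over $Z\cap B$ and $Z\setminus B$, which content lacks.
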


One may conjecture that the same result should be true if $(n-1)$-width is replaced by $(n-2)$-width, due to the use of $(n-2)$-width in Conjecture \ref{psc-conj}. However, this is false: embed an $(n-1)$-dimensional disk of large radius in $\mathbb{R}^{n+1}$, and take the boundary of a small neighbourhood of the disk. This boundary would be an $n$-dimensional manifold with  $(n-1)$-width on the order of the radius of the disk, but would have unit balls  with small volume (see \cite{alpert2024macroscopic}).

In this example, the topologies of the balls we measure are nontrivial, which is different from the situation of measuring infinitesimal balls in the definition of scalar curvature, which must have trivial topology. This suggests that a stronger definition of macroscopic scalar curvature which restricts the topology of balls could potentially bound the $(n-2)$-width. Alpert, Balitskiy, and Guth proposed alternate ways to measure macroscopic scalar curvature in \cite{alpert2024macroscopic}. One of their suggestions was to measure the volume of balls in the universal cover of larger balls, but their conjecture using this method was recently disproven by Kumar and Sen in \cite{kumar2026urysohn}. Another suggestion of theirs was to define positive macroscopic scalar curvature by bounding the volume of balls, as well as imposing a condition on loops which are contained within small balls. 

Let us describe the latter condition more precisely. Let $M$ be a Riemannian manifold, and let $G=\mathbb{Z}$ if $M$ is orientable and $G=\mathbb{Z}_2$ if $M$ is nonorientable. By a \emph{volume condition} (VC) on $M$, we mean a choice of positive constants $r_0$ and $v_0$ which satisfy the following property:
\begin{itemize}
    \item[(VC)] Balls of radius $r_0$ have volume less than $v_0$.
\end{itemize}
By a \emph{filling condition} (FC) on $M$, we mean a choice of positive constants $r_1$ and $t_1$ which satisfy the following property:
\begin{itemize}
    \item[(FC)] Any $1$-cycle which is contained in a ball of radius $r_1$ is null-homologous with $G$-coefficients in the concentric ball of radius $t_1$.
\end{itemize}

The suggestion of Alpert, Balitskiy, and Guth was that these two conditions together form the correct definition of positive macroscopic scalar curvature. They conjectured the following.

\begin{conj}[Alpert--Balitskiy--Guth 2024, \cite{alpert2024macroscopic}]\label{pmsc-conj}
    For any dimension $n$ and any parameters $0<r_1<t_1$, there exist parameters $r_0,v_0$ and a constant $c(n,r_1,t_1)$ such that the following holds.
    
    Suppose $M$ is a complete $n$-dimensional manifold which satisfies (VC) with parameters $r_0,v_0$ and (FC) with parameters $r_1,t_1$.
    Then 
    $$\UW_{n-2}(M) \leq c(n,r_1,t_1).$$
    In particular, the macroscopic dimension of $M$ is at most $n-2$.
\end{conj}

They proved a weaker version of this conjecture when $n=3$, where instead of a constant $c(n,r_1,t_1)$ which only depends on the filling condition (FC) and the dimension $n$, they have a constant which also depends on the topology of $M$. 

Throughout this paper, unless otherwise stated, we will use homology coefficients $G = \mathbb{Z}$ if $M$ is orientable and $G=\mathbb{Z}_2$ if $M$ is nonorientable.

\begin{thm}[Alpert--Balitskiy--Guth 2024, \cite{alpert2024macroscopic}]\label{ABG-thm}
  For any parameters $0<r_1<t_1$, there exist parameters $r_0,v_0$ such that the following holds.
    
    Suppose $M$ is a complete $3$-dimensional manifold which satisfies (VC) with parameters $r_0,v_0$ and (FC) with parameters $r_1,t_1$.
    Then
    $$\UW_{1}(M) \leq 12(b+1)t_1,$$
    where $b = \dim H_1(M;G)$. In particular, if $b$ is finite, then $M$ has macroscopic dimension at most $1$.
\end{thm}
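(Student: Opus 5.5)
Fix a basepoint $p \in M$ and let $d(x)=d(p,x)$. We study the level sets of $d$, cut into shells of width comparable to $t_1$, and build a map to a graph: each connected component of each shell is sent to a vertex, and edges of the graph record adjacency between components in neighbouring shells. This is the standard Reeb-graph construction, so $\UW_1(M)$ is controlled by the largest diameter of a component of $d^{-1}([a,a+t_1])$. It therefore suffices to show that every such component $C$ has diameter at most roughly $12(b+1)t_1$.

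The first step uses (VC) with $r_0,v_0$ small relative to $r_1$. By the Guth-type argument (Theorem of Guth 2017, or a local version of it), small volume of $r_0$-balls gives $\UW_2(M)\lesssim r_0$. More usefully, we take a level surface $\Sigma=d^{-1}(a)$, perturbed to be a smooth surface. Since $\Sigma$ lies in a $3$-manifold with small-volume balls, coarea and the small volume let us choose $a$ so that $\Sigma$ has controlled local area. We then cut $\Sigma$ into pieces of diameter $< r_1$, using a $1$-width bound for surfaces of small area in $r_0$-balls.

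The second step applies (FC). Take two points $x,y$ in one component $\Sigma_0$ of the level set, and join them by a path $\gamma$ in $\Sigma_0$ and by a path through the region $d^{-1}([0,a])$ via near-geodesics to $p$. These form a loop. We decompose it into small loops (each of diameter $<r_1$) plus a combination of at most $b$ homology generators. Each small loop bounds in a $t_1$-ball. Intersecting the filling $2$-chain with $\Sigma_0$ then shows that $\gamma$ is homologous to short pieces. We argue that if $d(x,y)$ were larger than $(b+1)\cdot 12 t_1$, we could find $b+1$ disjoint "long" loops through the component that are homologically independent in $H_1(M;G)$, which contradicts $\dim H_1=b$. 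Concretely, long cycles that are independent of the small-fillable ones must inject into $H_1(M)$: any relation among them would give a $2$-chain whose intersection with a level set separating $x$ from $y$ yields a contradiction by a linking/intersection-number argument with $G$ coefficients.

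The main obstacle is this counting step: converting "the component is long" into "there are $b+1$ independent classes". I would use the intersection pairing between $1$-cycles and the relative $2$-cycles formed by level-set components. Cutting a long component at $b+1$ well-separated levels gives $b+1$ separating surfaces. Loops that cross one of these surfaces exactly once, and close up through the inner region, have nonzero intersection numbers, and those intersection numbers form a triangular matrix. This forces independence. The role of (FC) is to ensure that such closing loops exist, rather than being blocked by local topology, and that their homology is not killed by small fillings, since those fillings stay within $t_1$ of the loop and hence miss separated surfaces. The $12t_1$ spacing accounts for the fillings' extent on both sides together with the shell width.
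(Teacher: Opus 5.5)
\textbf{There is a genuine gap in your counting step.} You never produce a \emph{closed} $2$-cycle, and without one the triangular matrix proves nothing. Intersection numbers of loops with ``relative $2$-cycles formed by level-set components'' are not invariants of $H_1(M;G)$: a surface whose boundary lies in the interior of $M$ can be crossed once by a null-homologous loop. Closed level sets such as $S_s(x)$ do not help either, since they bound balls and so pair to zero with every $1$-cycle. The surfaces must have boundary, and that boundary must be capped off \emph{away from the loops}. This is exactly where (VC) and (FC) are needed, and your uses of them do not supply it.
\begin{itemize}
\item (FC) is not what makes the closing loops exist. Take $x_0,\dots,x_{b+1}$ on a path $\gamma$ in the long component of $S_t(p)$, pairwise more than $12t_1$ apart, and $\eta_i$ a minimizing geodesic from $x_i$ to $p$. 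The loops are then just the triangles $T_i=\eta_{i-1}\cup\gamma|_{[x_{i-1},x_i]}\cup\eta_i$, which exist by completeness.
\item The ``small loops plus generators'' decomposition carries no information. Any null-homologous difference can be subdivided into small boundaries, but their fillings may lie anywhere.
\item The (VC) step feeds into nothing. Coarea gives area control on $d^{-1}(a)$ only on average, not uniformly in every ball, and cutting that surface into $r_1$-pieces is never used.
\end{itemize}

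\textbf{The missing construction} is the one in the proof of Proposition \ref{ABG-prop} with $n=3$.
\begin{itemize}
\item Let $\Sigma$ be the component of $S_{4t_1+\rho_1}(x_i)\cap B_{t_1+\rho_2}(\eta_i)$ through $\eta_i$; it meets $\eta_i$ exactly once.
\item Apply coarea to $q\mapsto(d(x_i,q),d(\eta_i,q))$ on $B_{5t_1}(x_i)$. This needs (VC) at scale $r_0=5t_1$, say with $v_0=r_1t_1^2/2$; a bound on balls of radius $r_0\ll r_1$ gives no control of $\Vol B_{5t_1}(x_i)$. It yields $\rho_1\in(0,t_1)$ and $\rho_2\in(0,t_1/2)$ for which $\Gamma=\partial\Sigma$ has length less than $r_1$.
\item Each component of $\Gamma$ then lies in an $r_1$-ball, so (FC) caps it inside its $t_1$-neighbourhood.
\item That neighbourhood misses $\eta_i$, because $\Gamma$ is at distance $t_1+\rho_2$ from it.
\item It misses $\gamma$, because $\Gamma\subset\{d(p,\cdot)<t-t_1\}$.
\item It misses each $\eta_j$ with $j\neq i$, since otherwise $d(x_i,x_j)\le 10t_1+2\rho_1<12t_1$. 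This, not ``fillings on both sides plus shell width,'' is where $12t_1$ comes from.
\end{itemize}
With these closed cycles $W_i$ your triangular-matrix device does work, since $T_i\cdot W_j\neq0$ only for $j\in\{i-1,i\}$. It is equivalent to the paper's use of a single $\eta_{i^*}$ with nonzero net coefficient in a homology relation.

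\textbf{A smaller point on the constant.} To get exactly $12(b+1)t_1$, use the Reeb graph of $d(p,\cdot)$ itself, whose fibers are components of level sets. Components of thick shells do not give that bound.
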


\subsection{Main Result}

The main theorem of this paper generalizes the $1$-width bound from the previous theorem to higher dimensions, by imposing a stronger condition than that of the previous theorem.

Suppose we are dealing with $n$-dimensional manifolds. We will use the same volume condition (VC), however by \emph{filling conditions} (FC'), we mean a choice of pairs of constants $(r_k,t_k)_{k=1}^{n-2}$ (with $r_k > 2t_{k-1}$) satisfying the following property:
\begin{itemize}
    \item[(FC')] Any $k$-cycle which is contained in a ball of radius $r_k$ is null-homologous with $G$-coefficients in the concentric ball of radius $t_k$.
\end{itemize}

Of course, the two conditions (VC) and (FC') are stronger than the two conditions (VC) and (FC) in Conjecture \ref{pmsc-conj} and Theorem \ref{ABG-thm}. Our main theorem shows that this stronger condition is enough to bound the $1$-width of manifolds of arbitrary dimension, assuming a bound on the first Betti number. 
\begin{thm}[Main Theorem]\label{main-thm}
    For any dimension $n$ and any parameters $0 < r_k < t_k$ (for all $k \in \{1,\dots,n-2\}$) satisfying $r_k > 2t_{k-1}$, there exist parameters $r_0,v_0$ such that the following holds.
    
    Suppose $M$ is a complete $n$-dimensional manifold which satisfies (VC) with parameters $r_0,v_0$ and (FC') with parameters $(r_k,t_k)_{k=1}^{n-2}$.
    Then
    $$\UW_{1}(M) \leq 24(b+1)t_{n-2},$$
    where  $b = \dim H_1(M;G)$. In particular, if $b$ is finite, then the macroscopic dimension of $M$ is at most $1$.
\end{thm}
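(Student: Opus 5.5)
The strategy follows the scheme of Alpert--Balitskiy--Guth for Theorem \ref{ABG-thm}. We reduce the $1$-width bound to a statement about short loops. Fix a basepoint $p$ and consider the distance function $d(p,\cdot)$. If every connected component of every level-set band $\{x : a \le d(p,x)\le a+R\}$ has diameter at most $C(b+1)t_{n-2}$, then the nerve of the cover by such components (slightly overlapping bands) is a graph. The map to this graph then has fibers of diameter bounded by a constant multiple of $(b+1)t_{n-2}$, and the explicit constant $24$ comes from doubling the ABG bookkeeping. So the core is to show that a fiber component cannot be long, except through the at most $b$ independent homology classes of loops.

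The key input replacing $3$-dimensional topology is a \emph{small-filling lemma}: under (VC) with $r_0,v_0$ small and (FC'), every $(n-1)$-cycle $z$ of small mass is null-homologous via a chain lying within distance $\sim t_{n-2}$ of $z$. I would prove this by the skeleton-by-skeleton filling argument of Gromov/Guth. First, use (VC) together with the Guth/Papasoglu/Liokumovich--Lishak--Nabutovsky--Rotman machinery, via the Theorem of Guth above, to triangulate $M$ (or a neighborhood of $z$) at scale $\ll r_1$ so that the relevant cycles are pushed into a fine complex. Then fill inductively: $0$-skeleton pieces are joined by short paths; each resulting $1$-cycle lies in a ball of radius $r_1$ and is filled within radius $t_1$ by (FC'); the resulting $2$-cycles lie in balls of radius $2t_1<r_2$ and are filled within $t_2$; and so on up to dimension $n-2$. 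This produces a homotopy/chain map from a neighborhood of the skeleton with controlled displacement $\le t_{n-2}$. This is exactly where the hypothesis $r_k>2t_{k-1}$ is consumed.

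Next comes the decomposition step advertised in the abstract. Decompose $M$ into a family of chains with good combinatorial structure, in the style of Nabutovsky--Rotman--Sabourau sweepouts: slices of the distance function, cut into pieces by the fine triangulation. The aim is to argue that a connected component $U$ of a band has boundary components which are $(n-1)$-cycles. If $U$ contained two points at distance $\gg (b+1)t_{n-2}$, we would build $b+1$ disjoint loops crossing $U$'s "ends" and separated by distance $>2t_{n-2}$. By the filling lemma and duality (intersection of $1$-cycles with small separating $(n-1)$-cycles, using $G$-coefficients appropriately for orientability), these loops would yield $b+1$ independent classes in $H_1(M;G)$, a contradiction.

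I expect the main obstacle to be this last part: producing separating hypersurfaces inside the band that are genuinely homologically small, so that intersection numbers can detect $H_1$. In dimension $3$, ABG use that the $1$-skeleton filling suffices for surfaces. Here we must control all intermediate dimensions, and the boundary of a band component need not be a manifold. I would first try to work entirely with chains in the fine simplicial structure, using the NRS-style decomposition to guarantee that each cut piece has small diameter and that the cut loci form honest $(n-1)$-cycles, before invoking the filling lemma.
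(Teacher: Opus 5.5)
Your outer shell matches the paper: level sets of $d(p,\cdot)$ and their Reeb graph, $b+2$ far-apart points on a long component, a null-homologous combination $z$ of geodesic triangles detected by a closed $(n-1)$-cycle, and a skeleton-by-skeleton use of (FC') that consumes $r_k>2t_{k-1}$. But both load-bearing steps are missing.

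The first is the one you flag as the main obstacle, and it is resolved one dimension lower than where you look. With $\tau=2t_{n-2}$, take $\Sigma$ to be the component of $S_{4\tau+\rho_1}(x_{i^*})\cap B_{\tau+\rho_2}(\eta_{i^*})$ meeting $\eta_{i^*}$, in a single transverse point. Its boundary $\Gamma$ is a closed $(n-2)$-submanifold. The coarea inequality for $q\mapsto(d(x_{i^*},q),d(\eta_{i^*},q))$ over a $\tau\times\frac{\tau}{2}$ box, together with (VC) on $B_{5\tau}(x_{i^*})$, lets you choose $\rho_1,\rho_2$ with $\Vol_{n-2}(\Gamma)<v_1$; this is the only use of (VC). If $\Gamma$ bounds within its $\tau$-neighbourhood, the choice of radii and the $12\tau$ separation keep that filling off $z$. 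Then $\Sigma$ plus the filling is a closed $(n-1)$-cycle with nonzero intersection number with $z$. So the object to be filled is a small $(n-2)$-cycle.

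Your filling lemma for small $(n-1)$-cycles is in fact false under the hypotheses, since (FC') stops at $k=n-2$. For small $\epsilon$, $S^{n-1}_\epsilon\times\mathbb{R}$ satisfies (VC) and (FC'), yet $S^{n-1}_\epsilon\times\{0\}$ has tiny mass and is not null-homologous at all.

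The second gap is how to fill such a $\Gamma$ with thresholds depending only on $n$ and (FC'). A small-volume $(n-2)$-submanifold can be long and thin, so it lies in no $r_k$-ball. Triangulating $M$ finely gives the induction nothing to run on. The hypotheses control $M$ only at the fixed scales $r_0,r_k$, so a triangulation at scale $\ll r_1$ has no geometry bounds independent of $M$. Likewise, Guth's bound on $\UW_{n-1}(M)$ says nothing about how to cone $\Gamma$ off. What the induction needs is an auxiliary $(n-1)$-dimensional complex bounded by $\Gamma$, with tiny cells. It must dictate which small pieces to fill at each stage and guarantee that fillings chosen on shared faces cancel.

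The paper builds this complex from $S=\Gamma$ itself, as follows.
\begin{enumerate}
\item The inequality $\FillRad(S)\le\frac{d}{2}\Vol_d(S)^{1/d}$ (Gromov, Nabutovsky) yields a cubical pseudomanifold $P$ mapped into $L^\infty(S)$, with $\partial P\mapsto S$ and cubes of tiny image.
\item The Nabutovsky--Rotman--Sabourau pseudomanifold $N\subset P$, built from copies of $Y^d$ and $X^d$, has a degree-one map $h:N\to S$ factoring through $P^{(1)}\cup\partial P$.
\item Pushing forward the pieces of $N$ cuts $S$ into chains $S_{Q_{(k)}}$, indexed by flags of cubes. Each lies in a $\delta$-ball and they satisfy the boundary and cancellation relations of Lemma \ref{decomp-lem}.
\item (FC') is then applied flag by flag. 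The flags are needed because (FC') provides only homological fillings, so consistency must be enforced at the chain level.
\end{enumerate}
Thus the decomposition mentioned in the abstract is of the small submanifold $S$, not of $M$ into distance slices. Also, $24=12\cdot 2$ because the filling lands in the $2t_{n-2}$-neighbourhood.
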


Our proof will show that we can take $r_0 = 10t_{n-2}$ and $v_0$ satisfying
\begin{equation}\label{dependency}
\frac{v_0}{r_0^2} < \frac{1}{50} \left(\frac{\underset{1 \leq i \leq n-2}{\min} \{r_i-2t_{i-1} \}} {2^{n-2}(n-1)(n-2)}\right)^{n-2}, 
\end{equation}
where we take $t_0 = 0$.

If we have the same macroscopic curvature condition in the universal cover of a sufficiently connected manifold, we can conclude strong conditions about the topology of the manifold.

\begin{thm}\label{topcor}
    For any dimension $n$ and any parameters $0 < r_k < t_k$ (for all $k \in \{1,\dots,n-2\}$) satisfying $r_k > 2t_{k-1}$, there exist parameters $r_0,v_0$ such that the following holds.
    
    Suppose $M$ is a closed $n$-dimensional manifold whose universal cover $\widetilde{M}$ satisfies (VC) with parameters $r_0,v_0$ and (FC') with parameters $(r_k,t_k)_{k=1}^{n-2}$. Suppose further that $\pi_2(M) = \dots = \pi_{n-2}(M) = 0$.
    Then some finite cover of $M$ is homotopy equivalent to either $S^n$ or connected sums of $S^{n-1} \times S^1$.
\end{thm}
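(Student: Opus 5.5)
We apply the Main Theorem (Theorem \ref{main-thm}) to the universal cover $\widetilde{M}$. Since $\widetilde{M}$ is simply connected, $b = \dim H_1(\widetilde{M};G) = 0$. Hence $\UW_1(\widetilde{M}) \leq 24\,t_{n-2}$, so $\widetilde{M}$ has macroscopic dimension at most $1$. The rest follows the Chodosh--Li--Liokumovich scheme from \cite{chodosh2023classifying}: first turn this width bound into a statement about $\pi_1(M)$, then use the vanishing of $\pi_2,\dots,\pi_{n-2}$ to pin down the homotopy type.

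\textbf{Step 1: $\pi_1(M)$ is virtually free.} The deck group $\Gamma = \pi_1(M)$ acts cocompactly by isometries on $\widetilde{M}$, so $\Gamma$ is quasi-isometric to $\widetilde{M}$. A width-bounded map $\widetilde{M}\to Y$ with $Y$ a graph shows that $\widetilde{M}$, and hence $\Gamma$, has asymptotic dimension at most $1$ (more precisely, uniformly bounded $1$-width). I would cite the standard fact used in \cite{chodosh2023classifying}: a finitely presented group with this property is virtually free (via Gentimis / Fujiwara--Whyte, asymptotic dimension one implies virtually free). Pass to a finite cover $M'$ with $\pi_1(M') = F_k$ free; the hypotheses on $\pi_j$ and on the universal cover are unchanged.

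\textbf{Step 2: the homotopy type when $\pi_1 = F_k$.} First suppose $k=0$. Then $M'$ is simply connected with $\pi_2 = \dots = \pi_{n-2} = 0$, so it is $(n-2)$-connected. By Hurewicz and Poincaré duality, $H_{n-1}(M') \cong H^1 = 0$, so $M'$ is a homology sphere, and by Hurewicz and Whitehead it is homotopy equivalent to $S^n$.

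For $k\geq 1$, compare $M'$ with $N = \#_k (S^{n-1}\times S^1)$. Build a map $M'\to \bigvee_k S^1$ inducing the isomorphism on $\pi_1$. Take transverse preimages of points: these are $k$ two-sided hypersurfaces $\Sigma_i$. Cutting $M'$ along the $\Sigma_i$ should leave a simply connected piece with vanishing $\pi_2,\dots,\pi_{n-2}$. In the universal cover, $\pi_j(\widetilde{M'}) = 0$ for $j \leq n-2$, so $\widetilde{M'}$ is $(n-2)$-connected. Poincaré duality on the noncompact $\widetilde{M'}$ (cohomology with compact supports, relating to the ends of the free group) then computes $H_{n-1}(\widetilde{M'})$. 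This should let us show each $\Sigma_i$ is a homotopy $(n-1)$-sphere and that $M'$ is homotopy equivalent to $N$. Alternatively, and more cleanly, construct a degree-one map $N \to M'$ (or $M'\to N$) inducing an isomorphism on $\pi_1$. Then check that it is a homology isomorphism with $\mathbb{Z}[F_k]$-coefficients, i.e. on universal covers, using the $(n-2)$-connectivity and duality to control the top two degrees. Whitehead's theorem on the universal covers then finishes.

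\textbf{Main obstacle.} I expect the main obstacle to be Step 2 in the $k\geq1$ case, specifically controlling $H_{n-1}$ and $H_n$ of $\widetilde{M'}$ as $\mathbb{Z}[F_k]$-modules. This is where the argument of \cite{chodosh2023classifying} in dimensions $4,5$ must be adapted to general $n$ using the higher connectivity hypothesis. I would first try to realize $\widetilde{M'}$ as a tree of simply connected $(n-2)$-connected pieces glued along homotopy spheres. Step 1 is the other point needing care: the exact group-theoretic citation, and checking that finite $1$-width of $\widetilde{M}$ gives asymptotic dimension $\leq 1$ for $\Gamma$. For the latter, a cover of the graph by small-diameter sets with multiplicity $2$ pulls back to a uniformly bounded cover of $\widetilde{M}$ of multiplicity $2$ at all scales, after rescaling the graph appropriately.
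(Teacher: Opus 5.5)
Your overall structure matches the paper's: apply Theorem \ref{main-thm} to $\widetilde M$ with $b=0$, show $\pi_1(M)$ is virtually free, pass to a finite cover with free fundamental group, then classify. However, Step 1 has a genuine gap. A map $\widetilde M\to Y$ to a graph with fibers of diameter $<w$ does not give $\operatorname{asdim}\widetilde M\le 1$. Finite $\UW_1$ is a single-scale condition. By contrast, $\operatorname{asdim}\le 1$ requires, for every $R$, a uniformly bounded cover in which each $R$-ball meets at most two members. Coarsening a cover of $Y$ (``rescaling the graph'') gives no bound on the diameters of the preimages, because only fibers are controlled. Your argument never uses that $\widetilde M$ is simply connected, and without that the implication is false. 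For example, the $\mathbb{Z}^2$-cover of a closed genus-two surface (the boundary of a thickened planar grid in $\mathbb{R}^3$) maps to the grid graph with uniformly bounded fibers and has cocompact deck group $\mathbb{Z}^2$. Your reasoning would then give $\operatorname{asdim}\mathbb{Z}^2\le 1$. (More simply, the Cayley graph of $\mathbb{Z}^2$ has $\UW_1=0$.) So the Gentimis/Fujiwara--Whyte theorem is never supplied with a valid hypothesis.

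The missing idea is to use $\pi_1(\widetilde M)=0$, which is exactly what Proposition \ref{ends-prop} does. A generic fiber of the PL map $f\colon\widetilde M\to P$ is a compact two-sided hypersurface, and every loop in $\widetilde M$ has algebraic intersection number zero with it. Suppose some finitely generated subgroup were one-ended; this gives a line $\tilde\gamma$ in a one-ended $K_2\to\widetilde M$. The intersection argument forces the first and last points $\tilde\gamma(t_\pm(r))$ of $\tilde\gamma$ in $f^{-1}(B_r(z))$ into one fiber. So they are $\widetilde M$-close, hence $K_2$-close by Lemma \ref{distbound}, contradicting $|t_+(r)-t_-(r)|\to\infty$. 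Dunwoody's accessibility \cite{dunwoody1985accessibility} then writes $\pi_1(M)$ as a graph of groups with finite edge groups and vertex groups with at most one end. The vertex groups cannot be one-ended, so they are finite, and \cite[Theorem 7.3]{scott1979topological} gives virtual freeness. (Alternatively, simple connectivity makes the space of fiber components of $f$ a tree. From this one can verify Manning's bottleneck criterion and conclude that $\widetilde M$ is quasi-isometric to a tree; only then does your asymptotic-dimension citation apply.)

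In Step 2, your $k=0$ argument is fine, but the $k\ge 1$ case is only a plan. It is precisely the content of \cite[Sections 2 and 3]{gadgil2009topology}, which the paper cites directly. You should also first pass to an orientable finite cover. The nonorientable $S^{n-1}$-bundle over $S^1$ has free $\pi_1$ and vanishing $\pi_2,\dots,\pi_{n-2}$, but it is not homotopy equivalent to $S^{n-1}\times S^1$.
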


Similar to Theorem \ref{main-thm}, we can take any $r_0,v_0$ satisfying inequality (\ref{dependency}).

The proof of Theorem \ref{main-thm} has three main steps, which will make up the next three sections of this paper. In Section \ref{ABG-sec}, we generalize the Alpert-Balitskiy-Guth argument to higher dimensions, and show it suffices to fill any $(n-2)$-dimensional submanifold $S$ which has small volume. In Section \ref{fillrad-sec}, we adapt a construction of Nabutovsky--Rotman--Sabourau  to decompose the submanifold $S$ in a convenient way. In Section \ref{proof-sec}, we use this decomposition to fill in $S$ by breaking it into small pieces that can be filled, hence proving Theorem \ref{main-thm}. Finally, in Section \ref{top-sec}, we prove Theorem 4 by modifying an argument of Chodosh, Li, and Liokumovich (the proofs of \cite[Theorem 1 and Corollary 14]{chodosh2023classifying}, which uses results from \cite{gadgil2009topology}). We also use a connection between macroscopic dimension and virtually free fundamental groups (Theorem \ref{virtfree-thm}) which seems to have been known by experts.

\subsection*{Acknowledgements} The author would like to thank his advisor, Yevgeny Liokumovich, for unending patience and support with this project. The author would also like to thank Talant Talipov, Amal Vayalinkal, and Kevin Santos for numerous helpful conversations.

\section{Bounding Width by Filling Submanifolds}\label{ABG-sec}

In this section, we generalize the argument of Theorem \ref{ABG-thm} provided by Alpert, Balitskiy, and Guth \cite{alpert2024macroscopic} to $n$-dimensional manifolds.

\begin{prop}\label{ABG-prop}
    For any parameters $v_1,t_1,$ there exist parameters $r_0 = 5t_1$ and $v_0 = \frac{v_1\cdot t_1^2}{2}$ such that the following holds.
    
    Suppose $M$ is a complete $n$-dimensional manifold which satisfies (VC) with parameters $r_0,v_0$, as well as the following filling condition:
    \begin{itemize}
        \item[(FC'')] Any closed $(n-2)$-submanifold (orientable if $M$ is) with volume less than $v_1$ is null-homologous with $G$-coefficients within a neighbourhood of radius $t_1$, where $G = \mathbb{Z}$ if $M$ is orientable and $G=\mathbb{Z}_2$ otherwise.
    \end{itemize}
    Then
    $$
    \UW_1(M) \leq 12(b+1)t_1,
    $$
    where $b = \dim H_1(M;G)$.
\end{prop}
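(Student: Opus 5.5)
We follow the Alpert--Balitskiy--Guth strategy. The map realizing the $1$-width will be built from a distance function. Fix a basepoint $p\in M$ and let $f(x)=d(p,x)$. Partition $[0,\infty)$ into intervals of length comparable to $t_1$ and consider the annuli $A_j=f^{-1}([jt_1,(j+1)t_1))$. The target graph $\Gamma$ has one vertex for each connected component of each annulus (or a slightly thickened version), with edges between components of consecutive annuli that touch. The map $M\to\Gamma$ is the standard nerve-type map. Its fibers are then contained in unions of boundedly many components, so it suffices to bound the diameter of each component of each annulus by $O((b+1)t_1)$.

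To bound these diameters, suppose a component $C$ of an annulus $A_j$ contains two points $x,y$ far apart. We use a coarea and volume argument to produce a small hypersurface. By (VC) with $r_0=5t_1$, balls of radius $5t_1$ have volume less than $v_0=v_1t_1^2/2$. Fix a ball $B$ of radius $r_0$ around a point of $C$. Then by coarea, averaging over levels of $f$ and of $d(x,\cdot)$ in ranges of width $t_1$, we find a level set $\Sigma=f^{-1}(s)\cap\{d(x,\cdot)=\rho\}$, or its intersection with the ball, which is a closed $(n-2)$-submanifold (after perturbation to generic smooth functions) of volume less than $v_0/t_1^2\cdot 2=v_1$. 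This double coarea step is where the $t_1^2$ in $v_0$ comes from.

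Next we apply (FC''). Since $\Vol(\Sigma)<v_1$, the submanifold $\Sigma$ bounds a chain $D$ within its $t_1$-neighbourhood. Now $\Sigma$ is the boundary of a piece of the hypersurface $f^{-1}(s)$, namely its intersection with the ball around $x$. Gluing this piece to $D$ gives an $(n-1)$-cycle $Z$, which is contained in the $t_1$-neighbourhood of a ball region. The intersection number of $Z$ with a path from $p$ through $x$ out to infinity, or to $y$, detects whether the level set separates. A path from $x$ to $y$ inside $C$, closed up by a path through smaller $f$-values, gives a loop. If the loop pairs nontrivially with $Z$, we get a nonzero class in $H_1(M;G)$. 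Doing this for $b+1$ disjoint such loops (from points spaced $12t_1$ apart along a long path in $C$) produces $b+1$ classes with a triangular intersection matrix against the cycles $Z_i$, which are disjointly supported. This gives $b+1$ linearly independent elements of $H_1(M;G)$, a contradiction. Hence each component has diameter at most $12(b+1)t_1$, and the factor $12$ comes from the $5t_1$ balls plus the $t_1$ filling neighbourhoods on each side.

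The main obstacle is the separation/intersection step. We must show that the filled cycle $Z_i=(\text{piece of }f^{-1}(s_i))\cup D_i$ really has intersection number $\pm1$ (or $1$ mod $2$) with the $i$-th loop and $0$ with the others. This requires $D_i$ to stay inside its $t_1$-neighbourhood, which is disjoint from the other loops, and this is why the points must be spaced by more than a few multiples of $t_1$. The argument also uses that $M$ may be noncompact, with orientability determining the coefficients $G$. We would handle this exactly as in the ABG $3$-dimensional proof, since nothing there used $n=3$ except that the level sets of two functions are $(n-2)$-dimensional.
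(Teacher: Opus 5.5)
\textbf{Verdict: there is a genuine gap.} The overall strategy is sound: distance from $p$, double coarea in $B_{5t_1}$, capping a small $(n-2)$-submanifold via (FC''), and intersection numbers against $b+1$ loops. But the reduction to annuli fails with the stated constants, and the central separation step is deferred rather than proved.

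\textbf{The annulus reduction.} Replacing the Reeb graph of $f=d(p,\cdot)$ by annuli of width $t_1$ and a nerve map means the fibers are controlled only by diameters of (thickened) annulus components. So you would need the bound $12(b+1)t_1$ for those components, and that is exactly where the argument breaks. In an annulus component the points $x_i$ no longer satisfy $d(p,x_i)=t$ for a common $t$. The triangle-inequality estimates that keep the cycles $Z_i$ away from the loops then lose up to $t_1$, and they stop closing with $r_0=5t_1$ and spacing $12t_1$.

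For instance, suppose the filling of $\Sigma_i$ comes within $t_1$ of a point $c$ on the geodesic $\eta_j$ from $x_j$ to $p$. You then only get $d(x_i,x_j)\le 2(\rho+t_1)+\bigl(d(p,x_j)-d(p,x_i)\bigr)$, which can reach $13t_1$. Likewise, keeping $f^{-1}(s)$ more than $t_1$ from a path that wanders through the whole annulus forces $d(p,x_i)-s$ to range up to $2t_1+w$, where $w$ is the width of the $s$-window. That forces $\rho>4t_1+w$ to keep $\Sigma_i$ more than $t_1$ from $\eta_i$. Inside $B_{5t_1}(x_i)$ the coarea rectangle then has area at most $w(t_1-w)\le t_1^2/4$, which only gives $\Vol_{n-2}(\Sigma_i)<2v_1$.

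The remedy is what the paper does: quotient by the connected components of the level sets $S_t(p)$. Then the fibers are exactly these components, and all $x_i$ lie on a single $S_t(p)$.

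\textbf{The separation step.} You flag it as the main obstacle but do not carry it out. Also, ``a path through smaller $f$-values'' is too loose. The closing paths must be minimizing geodesics $\eta_i$ from $x_i$ to $p$, because only for these do you know they cross $f^{-1}(s)$ exactly once, at distance $t-s$ from $x_i$.

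With level sets and these geodesics, your construction works with exactly the stated constants. It is a genuine dual variant of the paper's: the paper caps off the piece of the sphere $S_{4t_1+\rho_1}(x_{i^*})$ inside the tube $B_{t_1+\rho_2}(\eta_{i^*})$, whereas you cap off $\Pi_i=S_s(p)\cap B_\rho(x_i)$. Take $s\in(t-3t_1/2,\,t-t_1)$ and $\rho\in(4t_1,5t_1)$. This rectangle has area $t_1^2/2$ and its preimage lies in $B_{5t_1}(x_i)$, so some regular value gives $\Vol_{n-2}(\Sigma_i)<v_1$. The needed checks then go through:
\begin{itemize}
\item $\Pi_i$ and the filling $D_i$ of $\Sigma_i=\partial\Pi_i$ miss $\gamma\subset S_t(p)$, since $t-s>t_1$.
\item $\eta_i$ meets $\Pi_i$ exactly once, and $d(\Sigma_i,\eta_i)\ge(\rho-(t-s))/2>t_1$.
\item For $j\ne i$, a point of $\eta_j$ in $\Pi_i$ would force $d(x_i,x_j)<\rho+3t_1/2$.
\item For $j\ne i$, a point of $\eta_j$ within $t_1$ of $\Sigma_i$ would force $d(x_i,x_j)<2(\rho+t_1)<12t_1$.
\end{itemize}

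\textbf{A smaller correction.} The triangles $T_i$ share the sides $\eta_i$, so $Z_i$ pairs nontrivially with both $T_i$ and $T_{i+1}$, with opposite signs, not with the $i$-th loop only. The resulting bidiagonal matrix with $\pm1$ on the diagonal still gives linear independence. This is equivalent to the paper's argument, which picks $\eta_{i^*}$ with nonzero net coefficient in a null-homologous combination.
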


We denote by $B_r(A)$ the tubular neighbourhood of a set $A$ with radius $r$. Moreover, $S_r(A)$ denotes $\partial B_r(A)$, consisting of all points with distance $r$ from $A$.

\begin{proof}
Suppose $M$ satisfies 
\begin{itemize}
    \item[(VC)] Balls of radius $5t_1$ have volume less than $\frac{v_1 \cdot t_1^2}{2}$
\end{itemize}
as well as (FC''). We will prove that, for any point $p\in M$, each connected component of each level set of the function $d(\cdot, p)$ has diameter at most $12(b+1)t_1$. Then the Reeb graph $Y$ is constructed by quotienting each connected component of each level set, and the quotient map $\pi: M \to Y$ would have fibers of diameter at most $12(b+1)t_1$, proving the desired $1$-width bound. 

Fix $p \in M$ and consider the metric spheres $S_t(p)$, $t > 0$. Suppose to the contrary that one of them has a connected component $C$ of diameter greater than $12(b + 1)t_1$. Necessarily this requires $t > 6t_1$.  We can assume without loss of generality that this component $C$ is path-connected, as otherwise we can pick a path in a small open neighbourhood, and all the inequalities below will still hold up to an arbitrarily small error. Hence, we assume there is a path $\gamma$ in $C$ connecting two points that are greater than distance $12(b+1)t_1$ apart.

Pick points $x_0, x_1, \ldots, x_{b + 1}$ along $\gamma$ such that the distance in $M$ between every two of them is greater than $12t_1$. For each $i = 0, \dots, b + 1$, let $\eta_i$ be the minimizing geodesic of length $t$, connecting $x_i$ to $p$.  
We have $b + 1$ triangles, each consisting of consecutive geodesics $\eta_{i-1}$ and $\eta_i$ together with the portion of $\gamma$ from $x_{i-1}$ to $x_i$. Their classes in $H_1(M;G)$ must be linearly dependent, so some nontrivial linear combination of them must be zero in homology. We let $z$ be such a homologically trivial cycle. There is some $i^*$ such that the net contribution of $\eta_{i^*}$ to $z$ has nonzero coefficient. We want to construct an $(n-1)$-cycle that intersects $z$ transversely once along $\eta_{i^*}$ and nowhere else.

Without loss of generality, we can assume the functions $d(x_{i^*}, \cdot)$ and $d(\eta_{i^*}, \cdot)$ are smooth (by approximating them by smooth functions, with an arbitrarily close approximation). We claim that there is a choice $(\rho_1, \rho_2) \in (0, t_1)\times (0, t_1/2)$ for which the following properties hold.
\begin{enumerate}
\item $S_{4t_1 + \rho_1}(x_{i^*})$ is an embedded hypersurface,
\item $S_{4t_1 + \rho_1}(x_{i^*}) \cap S_{t_1+\rho_2}(\eta_{i^*})$ is an embedded union of disjoint $(n-2)$-dimensional submanifolds,
\item These submanifolds have total $(n-2)$-volume less than $v_1$.
\end{enumerate}
For the first two properties, by Sard's theorem we know that the regular values of the (assumed smooth) functions $d(x_{i^*}, \cdot)$ and $d(x_{i^*}, \cdot) \times d(\eta_{i^*}, \cdot)$ have full measure.  For the third property, we use the coarea inequality on the function $f(q) = (d(x_{i^*}, q) - 4t_1, d(\eta_{i^*}, q) -t_1)$, which is $1$-Lipschitz in each coordinate:
\begin{align*} \int\limits_{(s_1, s_2) \in (0, t_1)\times(0,t_1/2)} \Vol_{n-2}(f^{-1}(s_1, s_2))\ ds_1ds_2 \leq& \Vol_n (f^{-1}((0, t_1)\times(0,t_1/2)))\\
 \leq& \Vol_n (B_{5t_1}(x_{i^*})) < \frac{v_1\cdot t_1^2}{2},
 \end{align*}
where the last inequality uses the volume condition (VC). We can choose a pair $(\rho_1, \rho_2)\in (0, t_1)\times(0,t_1/2)$ such that the corresponding union of $(n-2)$-dimensional submanifolds $f^{-1}(\rho_1, \rho_2)$ has at most average total volume, and thus has total volume less than $\frac{v_1 \cdot t_1^2}{2}\cdot \frac{1}{t_1 \cdot (t_1/2)} = v_1$, while simultaneously guaranteeing that $\rho_1$ and $(\rho_1, \rho_2)$ are regular values. Thus, the three desired properties hold.
 
 Let $\Sigma$ be the connected component of $S_{4t_1 + \rho_1}(x_{i^*}) \cap B_{t_1 + \rho_2}(\eta_{i^*})$ that intersects $\eta_{i^*}$. That intersection is orthogonal and is at a single point.  Let $\Gamma$ be the union of those components of $S_{4t_1 + \rho_1}(x_{i^*}) \cap S_{t_1 + \rho_2}(\eta_{i^*})$ that constitute the boundary of $\Sigma$. Because $\Gamma$ has volume less than $v_1$, the filling condition (FC'') guarantees that $\Gamma$ can be filled in a neighbourhood of radius $t_1$ by an $(n-1)$-chain $\Sigma'$. We will be done if we show that $\Gamma$ stays at least distance $t_1$ from $z$, because then gluing $\Sigma$ and $\Sigma'$ along $\Gamma$ produces an $(n-1)$-cycle with $G$-coefficients whose algebraic intersection number with $z$ is nonzero. This would contradict that $z$ is null-homologous.
 
 First we check that $\Gamma$ stays at least distance $t_1$ away from $\gamma$.  Let $a$ be a point on $\Gamma$.  There is a point $c$ on $\eta_{i^*}$ with $d(c, a) \leq t_1 + \rho_2$.  By the reverse triangle inequality on $x_{i^*}$, $a$, and $c$, we have $d(x_{i^*}, c) \geq 3t_1 + \rho_1 - \rho_2$.  Because $\eta_{i^*}$ is a geodesic, we have $d(x_{i^*}, c) + d(c, p) = t$. Hence, the triangle inequality on $c$, $p$, and $a$ gives $d(p, a) \leq t - 2t_1 + 2\rho_2 - \rho_1\leq t - t_1$, and so $a$ must be at least distance $t_1$ from $S_t(p)$ and thus from $\gamma$.
 
 Next we check that $\Gamma$ stays at least distance $t_1$ away from each $\eta_j$ with $j \neq i^*$.  Let $a$ be a point on $\Gamma$, and suppose that there is a point $c$ on $\eta_j$ with $d(a,c) \leq t_1$.  Then the triangle inequality on $x_{i^*}$, $a$, and $c$ gives $d(x_{i^*}, c) \leq 5t_1 + \rho_1$, so the reverse triangle inequality on $x_{i^*}$, $c$, and $p$ gives $d(p, c) \geq t - (5t_1 + \rho_1)$.  Because $d(p, c) + d(c, x_j) = t$, we have $d(c, x_j) \leq 5t_1 + \rho_1$.  Thus, the triangle inequality on $x_{i^*}$, $c$, and $x_j$ gives $d(x_{i^*}, x_j) \leq 10t_1 + 2\rho_1 < 12t_1$, contradicting $d(x_{i^*}, x_j) > 12t_1$.
 
 Thus, filling $\Gamma$ in its $t_1$-neighbourhood and adding this filling to $\Sigma$ gives an $(n-1)$-cycle with $G$-coefficients whose algebraic intersection number with $z$ is nonzero, contradicting the fact that $z$ is null-homologous.
\end{proof}

\section{Decomposing Submanifolds}\label{fillrad-sec}
We now shift our attention to filling in $(n-2)$-submanifolds of $M$ with small volume, within a small neighbourhood. Let us denote a submanifold of $M$ by $S^d$, for $d = n-2$. The main result of this section is Lemma \ref{decomp-lem}. This decomposition lemma allows us to break $S$ up into small chains with convenient combinatorial structure, and in Section \ref{proof-sec} this will be our key tool to fill $S$ within a small neighbourhood and hence prove Theorem \ref{main-thm}. For the remainder of this section, however, we treat $S$ as its own independent manifold, as opposed to a submanifold of $M$. In other words, the remainder of this section applies when $S$ is an arbitrary manifold, of arbitrary dimension $d$. The chains will have coefficients $G = \mathbb{Z}$ if $S$ is orientable, and $G = \mathbb{Z}_2$ if $S$ is nonorientable. If $S$ is orientable then $\mathbb{Z}_2$ coefficients also work; this may be applied in Section 4 if $M$ is nonorientable but $S$ happens to be orientable.

Let us describe the main idea behind the decomposition lemma. Suppose $S$ lies in a higher dimensional ambient manifold $P$. One strategy for breaking $S$ up into small pieces is to triangulate $P$ finely, then take the small pieces of $S$ to be the parts of $S$ which lie within the small simplices of $P$. The issue is that there may not exist an ambient manifold $P$ that breaks up $S$ into small pieces which can be filled easily, which is what we are trying to do with this decomposition. So what one could hope to do instead is find a different manifold $N$ which is broken into nicer pieces by the triangulation of $P$, then ``project" these pieces (using a degree-one map) onto $S$. This is the strategy we take, but instead of manifolds we need to widen our scope to pseudomanifolds, and instead of triangulations we work with cubical structures. It should be possible to use triangulations instead, possibly improving the constant in our main theorem, but we choose to use a cubical structure as it permits easier computations in coordinates. 

The machinery developed in this section heavily uses material which was developed by Nabutovsky, Rotman, and Sabourau \cite[Sections 2 and 3]{nabutovsky2021sweepouts}.
\subsection{Construction in the Cube}\label{cube-subsec}

Let $C^{k}=[-1,1]^{k}$ be the standard cubical $k$-simplex, also called the $k$-cube. Define the space
$$
X^k = \{ x \in C^k : \text{there exist } i_1,i_{2} \in \{1,\dots,k\} \text{ distinct such that } |x_{i_1}|,|x_{i_2}| \leq \frac{1}{2} \}
$$
formed of the points of $C^k$ with at least two coordinates bounded above by $\frac{1}{2}$ in absolute value. This space is a tubular neighbourhood of the skeleton dual to the $1$-skeleton of $C^k$. Next, define the space
$$
Z^k = \{x \in C^k : \text{there exist } i_1, \dots, i_{k-1} \in \{1,\dots,k \} \text{ distinct such that }|x_{i_p}| \geq \frac{1}{2} \text{ for all }p\}
$$
formed of the points of $C^{k}$ with at least $k-1$ coordinates bounded below by $\frac{1}{2}$ in absolute value. This space is a tubular neighbourhood of the $1$-skeleton $(C^k)^{(1)}$ of $C^k$. Finally, let $Y^{k-1}$ be the intersection of $X^k$ and $Z^k$. This space is the cubical $(k-1)$-dimensional complex which is the boundary of $X^k$ minus the relative interior of $X^k \cap \partial C^k$. It can also be expressed as the union
\begin{align*}
{Y}^{k-1} =  \cup_{p=2}^k  \{ x \in C^{k} : \mbox{there exist } i_1,\cdots,i_p \in \{1,\dots,k\} & \mbox{ distinct such that } \nonumber \\
 &  |x_{i_1}| \leq \frac{1}{2},  \nonumber \\
 & |x_{i_2}| = \cdots = |x_{i_p}| = \frac{1}{2}, \nonumber \\
 & |x_i| > \frac{1}{2} \mbox{ for every } i \neq i_1,\cdots, i_p \}
\end{align*}
of cubical $(k + 1 - p)$-complexes with $2 \leq p \leq k$.
\begin{figure}[h]
    \centering
    \includegraphics[width=0.5\linewidth]{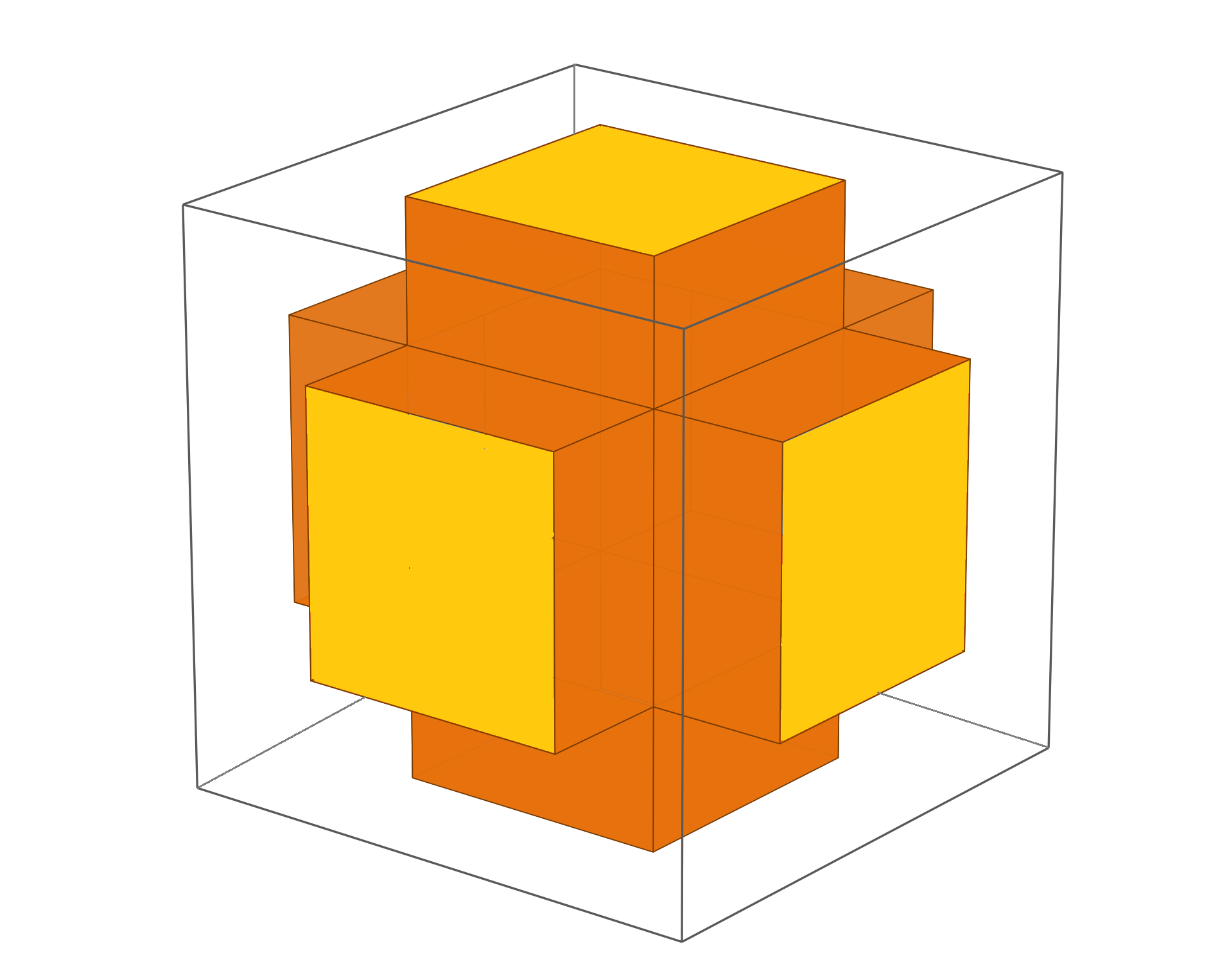}
    \caption{An image of $X^k$, $Z^k$, and $Y^{k-1}$ when $k=3$. Here, $Y^2$ is the orange surface, $X^3$ is the yellow part of the cube which lies `inside' $Y^2$, and the rest of the cube (`outside' of $Y^2$) is $Z^3$. Note when $k>3$, the boundary of $Y^{k-1}$ is connected, unlike the $k=3$ case pictured here.}
    \label{fig:cube-constructions}
\end{figure}

There is a canonical degree-one map $\pi: C^k \to C^k$ which projects $Z^k$ onto the $1$-skeleton $(C^k)^{(1)}$ of $C^k$, and stretches $X^k$ to take up all of $C^k$. This map is defined on $Z^k$ by applying the function 
$$
\mu(t) = \begin{cases}
    1 & \text{if }t \in [\frac{1}{2},1],\\
    2t & \text{if }t\in [-\frac{1}{2},\frac{1}{2}],\\
    -1 & \text{if }t\in [-1, -\frac{1}{2}],
\end{cases}
$$
to each coordinate, and it can be extended to all of $C^k$ in the natural way.

We will now state some necessary lemmas. Let $F^\ell$ denote one of the $\ell$-dimensional faces of $C^k$ for $\ell \in \{1,\dots,k\}$. In particular, $F^\ell \approx C^\ell$. By permuting coordinates and flipping signs if necessary, we can assume without loss of generality that $F^\ell \subseteq C^k$ is expressed in coordinates as
\begin{equation}\label{face-coord}
F^\ell = \{(1,\dots,1, x_{k-\ell + 1}, \dots, x_k) \in C^k : (x_{k-\ell+1}, \dots, x_k) \in C^\ell \}.
\end{equation}

\begin{lem}\label{face-lem}
For any $\ell$-dimensional face $F^\ell$ of $C^k$, we have
$$
Y^{k-1} \cap F^\ell \approx Y^{\ell - 1}.
$$
Similarly, 
$$
X^{k} \cap F^\ell \approx X^\ell.
$$
\end{lem}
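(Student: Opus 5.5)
Since the statement is invariant under the symmetries of $C^k$ (coordinate permutations and sign flips preserve $X^k$, $Y^{k-1}$, $Z^k$), I will assume $F^\ell$ has the coordinate form (\ref{face-coord}). The homeomorphism will be the obvious one: the restriction of the isometry $\phi: F^\ell \to C^\ell$, $(1,\dots,1,x_{k-\ell+1},\dots,x_k)\mapsto (x_{k-\ell+1},\dots,x_k)$. I will show that $\phi$ carries $X^k\cap F^\ell$ onto $X^\ell$ and $Y^{k-1}\cap F^\ell$ onto $Y^{\ell-1}$, so both maps are in fact equalities after the identification.

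For $X$: take a point $x\in F^\ell$. The first $k-\ell$ coordinates equal $1$, so $|x_i|=1>\frac12$ for these $i$. Hence any two distinct indices $i_1,i_2$ with $|x_{i_1}|,|x_{i_2}|\le \frac12$ must lie among the last $\ell$ coordinates. Therefore $x\in X^k$ exactly when $\phi(x)$ has two distinct coordinates of absolute value at most $\frac12$, which is exactly when $\phi(x)\in X^\ell$.

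For $Z$: $x\in Z^k$ means at least $k-1$ coordinates of $x$ have absolute value at least $\frac12$. The first $k-\ell$ coordinates always qualify. So the condition becomes: at least $(k-1)-(k-\ell)=\ell-1$ of the last $\ell$ coordinates satisfy $|x_i|\ge\frac12$, which is exactly $\phi(x)\in Z^\ell$. Since $Y^{k-1}=X^k\cap Z^k$ by definition, intersecting with $F^\ell$ gives $\phi(Y^{k-1}\cap F^\ell)=X^\ell\cap Z^\ell=Y^{\ell-1}$.

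There is no real obstacle. The only care needed is the degenerate case $\ell=1$, where $X^1$ is empty because two distinct indices do not exist. Then $X^k\cap F^1$ and $Y^{k-1}\cap F^1$ are also empty, by the same counting argument, so the statement still holds.
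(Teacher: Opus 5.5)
Your proposal is correct and is essentially the paper's argument. The paper's proof simply says the lemma follows directly from the set equations defining $X^k$ and $Y^{k-1}=X^k\cap Z^k$ together with the coordinate form (\ref{face-coord}), and you have written out that verification explicitly: the fixed coordinates equal to $1$ never count toward the $\le \frac{1}{2}$ condition and always count toward the $\ge \frac{1}{2}$ condition.
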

\begin{proof}
    This follows directly from the set equations defining $X^k$ and $Y^{k-1}$ as well as (\ref{face-coord}).
\end{proof}

Any $\ell$-dimensional face $F^\ell$ of $C^k$ would be contained in some of the higher dimensional faces of $C^k$. Each of these higher dimensional faces is a cube, and $F^\ell$ would also be a face within these cubes.

\begin{lem}\label{face-count}
    Every $\ell$-dimensional face of $C^k$ is also a face of precisely $k-\ell$ of the $(\ell+1)$-dimensional faces of $C^k$.
\end{lem}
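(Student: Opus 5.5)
We work in the coordinates of (\ref{face-coord}). After permuting coordinates and flipping signs, a face $F^\ell$ of $C^k$ is determined by fixing $k-\ell$ coordinates to values in $\{\pm 1\}$ and letting the remaining $\ell$ coordinates range freely over $[-1,1]$. Using the normal form (\ref{face-coord}), take $F^\ell$ to be the face where $x_1=\dots=x_{k-\ell}=1$ and $x_{k-\ell+1},\dots,x_k$ are free.

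An $(\ell+1)$-dimensional face $F^{\ell+1}$ of $C^k$ contains $F^\ell$ exactly when the following three conditions hold. Every coordinate that is free on $F^\ell$ is free on $F^{\ell+1}$. Every coordinate fixed on $F^{\ell+1}$ is also fixed on $F^\ell$, with the same value. The set of free coordinates of $F^{\ell+1}$ has exactly one more element than that of $F^\ell$.

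So an $(\ell+1)$-face containing $F^\ell$ is obtained by choosing one index $j\in\{1,\dots,k-\ell\}$ and releasing $x_j$. This gives the face
$$
\{x\in C^k : x_i = 1 \text{ for } i\in\{1,\dots,k-\ell\}\setminus\{j\}\}.
$$
Conversely, any $(\ell+1)$-face containing $F^\ell$ must keep the free set $\{k-\ell+1,\dots,k\}$ and add exactly one further free index, say $j$. Its remaining fixed coordinates must agree with those of $F^\ell$, namely $x_i=1$. Hence it is exactly the face above.

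These faces are distinct for distinct $j$, since their sets of free coordinates differ. This gives a bijection between the $(\ell+1)$-faces containing $F^\ell$ and the set $\{1,\dots,k-\ell\}$, so there are exactly $k-\ell$ of them.

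The only point requiring care is checking that a containing face cannot fix a released coordinate at $-1$ or at a different sign. This fails because containment forces every fixed value of $F^{\ell+1}$ to agree with the corresponding fixed value on $F^\ell$. The argument is otherwise routine.
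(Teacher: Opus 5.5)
Your proof is correct and follows the same approach as the paper: put $F^\ell$ in the normal form (\ref{face-coord}) and observe that the $(\ell+1)$-faces containing it correspond bijectively to choosing one of the $k-\ell$ fixed coordinates to release. You also spell out the converse direction, that a containing face must keep the remaining fixed values, which the paper leaves implicit. Your closing remark about ``fixing a released coordinate'' is awkwardly worded, but that point is already handled by your containment criterion.
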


\begin{proof}
    Refer to equation (\ref{face-coord}). The face $F^\ell$ is in the boundary of any $(\ell +1)$-dimensional face of $C^k$ which has one less constant coordinate. Hence, picking an $(\ell+1)$-dimensional face of $C^k$ containing $F^\ell$ as a face amounts to picking one of the constant coordinates of $F^\ell$ to make variable, of which there are $k-\ell$ choices. 
\end{proof}

\subsection{Cubical Pseudomanifolds}

The cubical complexes $Y^{k-1}$ and $X^k$ will be the basic building blocks of the pseudomanifold $N$ mentioned at the beginning of this section; see Figure \ref{fig:N-pseudomanifold}. We recall the definition of cubical pseudomanifolds.

\begin{definition}
    An $m$-dimensional cubical pseudomanifold with boundary is a cubical $m$-complex~$P$ such that
\begin{itemize}
\item every cube of $P$ is a face of some $m$-cube of $P$;
\item every $(m-1)$-cube of $P$ is the face of at most two $m$-cubes of $P$;
\item given two $m$-cubes of $P$, there exists a sequence of $m$-cubes of $P$ with two consecutive $m$-cubes having an $(m-1)$-face in common that starts at one of them and ends at the other.
\end{itemize}
The boundary $\partial P$ of an $m$-dimensional cubical pseudomanifold $P$ is the cubical $(m-1)$-subcomplex of $P$ formed of the $(m-1)$-cubes of $P$ which are the faces of exactly one $m$-cube of $P$.
\end{definition}

Note that cubical pseudomanifolds have fundamental classes with any coefficients for which they are orientable.

It is clear that $X^k$ is a cubical $k$-pseudomanifold, and it was proven in \cite[Proposition 2.5]{nabutovsky2021sweepouts} that $Y^{k-1}$ is a cubical $(k-1)$-pseudomanifold with boundary lying inside the boundary $\partial C^k$. In fact, their proof also shows that the boundary $\partial Y^{k-1}$ precisely equals the intersection $Y^{k-1} \cap \partial C^k$. We can express $Y^{k-1}$ as a cubical chain with $G$ coefficients such that the boundary $\partial Y^{k-1}$ as a pseudomanifold is the same as the boundary $\partial Y^{k-1}$ as a chain. Moreover, for any face $F^{k-1}$ of $C^k$, recall from Lemma \ref{face-lem} that $Y^{k-1}\cap F^{k-1} \approx Y^{k-2}$, and it is straightforward to show that we can express each $Y^{k-1} \cap F^{k-1}$ as a cubical chain with coefficients in $G$ such that the following equation holds:

\begin{equation}\label{Y-bndry}
    \partial Y^{k-1} = \sum_{F^{k-1} \in  \partial C^{k}} Y^{k-1} \cap F^{k-1}.
\end{equation}

When we are gluing together the pieces $Y^{k-1}$ to construct $N$, equation (\ref{Y-bndry}) will be a key ingredient in ensuring we can use our decomposition to construct a filling of $S$. However, there is an important subtlety. In the ambient pseudomanifold $P$, the complex $Y^{k-1} \cap F^{k-1}$ is not well-defined as a cubical chain without specifying which $k$-cube $C_{(k)}$ in $P$ the $(k-1)$-cube $F^{k-1}$ is a face of. In turn, it would also depend on which $(k+1)$-cube of $P$ the $k$-cube $C_{(k)}$ is considered a face of, and so on. 

To overcome this issue, we define the notion of a $C_{(k)}$-sequence. Let $P^{(k)}$ denote the $k$-skeleton of a cubical pseudomanifold $P$.

\begin{definition}
    Let $P$ be a cubical $m$-pseudomanifold. For a fixed cube $C_{(k)}$ in $P^{(k)}$, we will call a \emph{$C_{(k)}$-sequence} a nested sequence 
$$
Q_{(k)} = \{C_{(k)} \subset C_{(k+1)} \subset  \dots \subset C_{(m)} \},
$$
where $C_{(i)} \in P^{(i)}$. Also, if we have a cube $C_{(k)}$ in $P^{(k)}$, a cube $C_{(k+1)}$ which contains $C_{(k)}$, and a $C_{(k+1)}$-sequence $Q_{(k+1)}$, we will denote by $Q_{(k+1)} +C_{(k)}$ the $C_{(k)}$-sequence which is made by adding $C_{(k)}$ into the nested sequence $Q_{(k+1)}$. 
\end{definition}

Since a $C_{(m)}$-sequence for a top dimensional cube only contains the cube $C_{(m)}$ itself, we will simply write $C_{(m)}$ in place of $Q_{(m)}$.

We are now in a position to construct the pseudomanifold $N^{m-1}$ which is decomposed nicely by its ambient cubical pseudomanifold $P^m$. It will be constructed by gluing together copies of $Y^{m-1}$ which sit in each top-dimensional cube of $P$, as well as gluing in copies of $X^{m-1}$ to give $N$ empty boundary. We will also need to prove some relations with chains in intermediate skeletons of $N$.

\begin{figure}[t]
    \centering
    \includegraphics[width=0.5\linewidth]{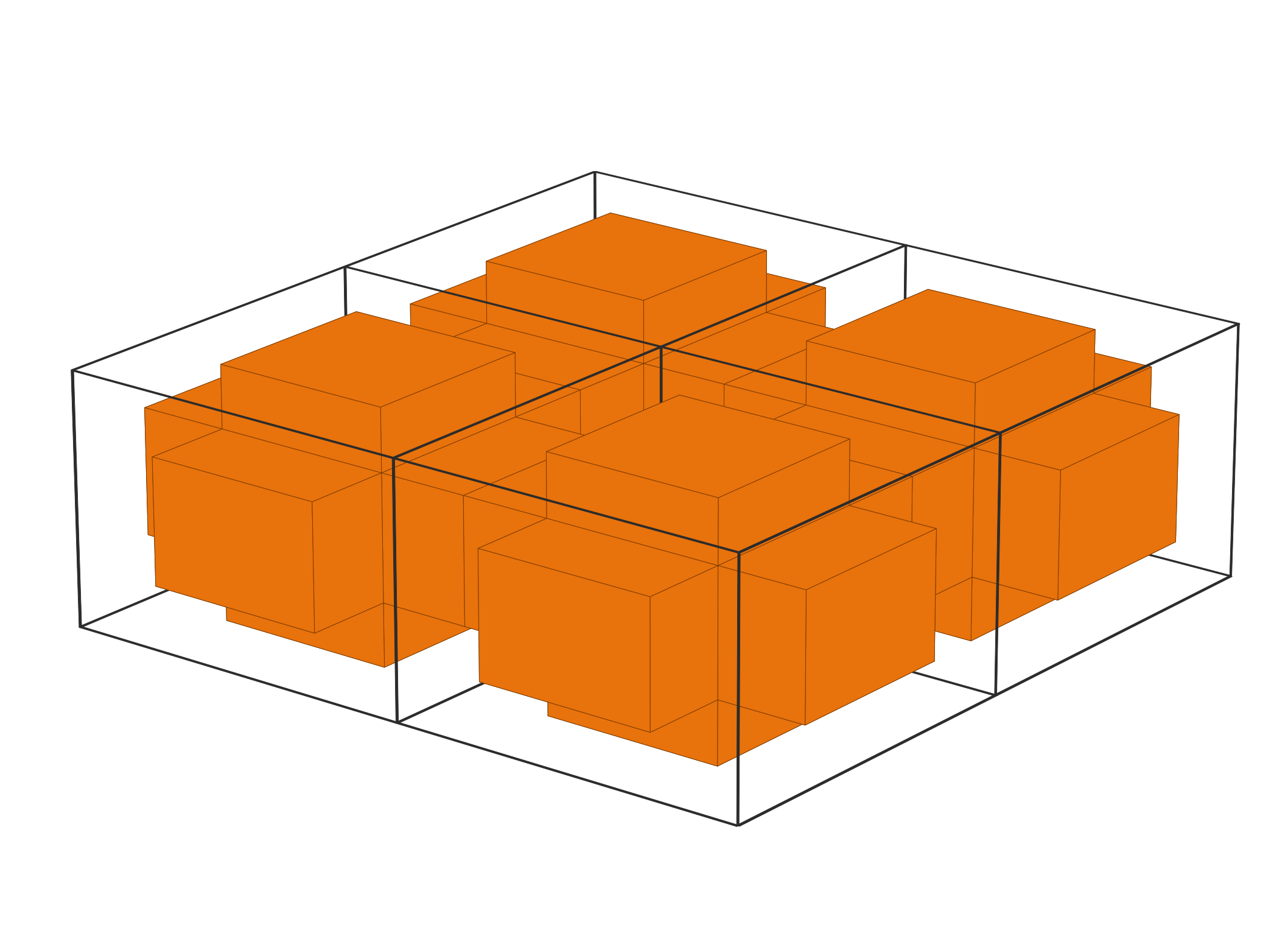}
    \caption{When $m=3$, the $2$-pseudomanifold $N$ is made up of copies of $Y^2$ glued together, along with copies of $X^2$ to cap the parts in the boundary of $P$. In this figure, the caps are the orange squares glued onto the boundary components of each $Y^3$ which lie in $\partial P$.}
    \label{fig:N-pseudomanifold}
\end{figure}
\begin{lem}\label{cube-decomp-lem}
    For any compact $m$-dimensional cubical pseudomanifold $P$ (with boundary) which is $G$-orientable, there exists a closed $(m-1)$-dimensional cubical pseudomanifold $N$ lying inside $P$, and for each $k$-cube $C_{(k)}$ in $P^{(k)} \setminus \partial P$ (with $k \geq 2$) and each $C_{(k)}$-sequence $Q_{(k)}$, there exists a $(k-1)$-chain $N_{Q_{(k)}}$ in $N$  such that:
    \begin{enumerate}[label=(\alph*)]
        \item 
        The chain $\sum_{C_{(m)} \in P^{(m)}} N_{C_{(m)}}$ represents a fundamental class in $H_{m-1}(N; G)$; 
        \item We have the equality
        $$
        \partial N_{Q_{(k)}} = \sum_{C_{(k-1)} \in \partial_{rel} C_{(k)}} N_{Q_{(k)} + C_{(k-1)}}
        $$
        as chains, where $\partial_{rel} C_{(k)}$ denotes the $(k-1)$-cubes in $\partial C_{(k)}$ which do not lie inside $\partial P$ (though they may intersect $\partial P$).
        \item If $Q_{(k)}$ and $Q'_{(k)}$ are two $C_{(k)}$-sequences which differ at only one entry, then
        $$
        N_{Q_{(k)}} + N_{Q'_{(k)}} = 0.
        $$
    \end{enumerate}
\end{lem}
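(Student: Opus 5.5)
We build $N$ by placing a copy of $Y^{m-1}$ in each top-dimensional cube and capping off with copies of $X^{m-1}$ along $\partial P$, following \cite[Sections 2 and 3]{nabutovsky2021sweepouts}. Concretely, for each $m$-cube $C_{(m)}$ of $P$, identify $C_{(m)}$ with $C^m$ (orientation compatible with the chosen fundamental class of $P$) and let $Y_{C_{(m)}}\subset C_{(m)}$ be the image of $Y^{m-1}$. By Lemma \ref{face-lem}, for a shared interior $(m-1)$-face $F$ of two $m$-cubes, both copies restrict to the same $Y^{m-2}\subset F$. So the $Y_{C_{(m)}}$ glue along interior faces. For a boundary face $F\subset\partial P$, the piece $Y_{C_{(m)}}\cap F\approx Y^{m-2}$ is the boundary of $X^{m-1}\cap F\approx X^{m-1}$ relative to $\partial F$ (again by Lemma \ref{face-lem} and the description of $Y$ as $\partial X$ minus $X\cap\partial C$). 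We glue in $X_F:=X^{m-1}\cap F$ for each such $F$.

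Define $N=\bigcup_{C_{(m)}} Y_{C_{(m)}} \cup \bigcup_{F\subset \partial P} X_F$. We then check the pseudomanifold axioms: each $(m-2)$-cube of $N$ lies in exactly two $(m-1)$-cubes, since \cite[Proposition 2.5]{nabutovsky2021sweepouts} gives $\partial Y^{m-1}=Y^{m-1}\cap\partial C^m$, and each boundary piece is matched either with a neighboring $Y$ or with a cap. Connectedness of the relevant components is not essential, because we only need a fundamental class in the sense of (a).

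Next come the chains. For $k=m$ set $N_{C_{(m)}}=Y_{C_{(m)}}$, oriented so that equation (\ref{Y-bndry}) holds, plus the caps $X_F$ for the faces $F\subset\partial C_{(m)}\cap\partial P$. For a $C_{(k)}$-sequence $Q_{(k)}=\{C_{(k)}\subset\cdots\subset C_{(m)}\}$ with $C_{(k)}\not\subset \partial P$, define $N_{Q_{(k)}}$ as the copy of $Y^{k-1}\approx Y^{m-1}\cap C_{(k)}$, given the orientation obtained by iterating the boundary orientation convention of (\ref{Y-bndry}) down the chain $C_{(m)}\supset C_{(m-1)}\supset\cdots\supset C_{(k)}$. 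In other words, $N_{Q_{(k)}}$ is the term indexed by $C_{(k)}$ in $\partial N_{Q_{(k+1)}}$. If $C_{(k)}$ meets $\partial P$ in some faces, we add the caps $X^{k-1}$ on faces of $C_{(k)}$ lying in $\partial P$, with matching orientation. Property (b) then reduces to (\ref{Y-bndry}) applied inside $C_{(k)}$, together with $\partial X^{k-1}\cap\partial C = Y^{k-2}$-type identities. These show that the terms on faces in $\partial P$ cancel against the cap boundaries, which is why only $\partial_{rel}C_{(k)}$ appears.

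For (a): interior $(m-1)$-faces $F$ are shared by exactly two $m$-cubes, whose induced orientations on $F$ are opposite by $G$-orientability of $P$. Hence their contributions to $\partial\sum N_{C_{(m)}}$ cancel, and the caps absorb the boundary faces. So $\sum N_{C_{(m)}}$ is a cycle that assigns a unit coefficient to each $(m-1)$-cube of $N$, which is a fundamental class.

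The main obstacle is (c), which is a sign consistency computation. Two sequences differing at one entry $C_{(j)}$ versus $C'_{(j)}$, with $k\le j<m$ and the same $C_{(j-1)},C_{(j+1)}$, correspond to the two ways of going around the square of faces $C_{(j-1)}\subset C_{(j)},C'_{(j)}\subset C_{(j+1)}$. By Lemma \ref{face-count}, exactly two $j$-faces of $C_{(j+1)}$ contain $C_{(j-1)}$. The identity $\partial\partial=0$ in the cubical chain complex of $C_{(j+1)}$ forces the induced orientations on $C_{(j-1)}$ through the two routes to be opposite. If instead $j=m$, the two $m$-cubes share $C_{(m-1)}$, and opposite orientation follows from the $G$-orientability of $P$ as in (a). The orientation of $N_{Q}$ is defined from the induced orientation of the smallest cube, so $N_Q=-N_{Q'}$ as chains. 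The bookkeeping step is to verify that the orientation assigned to the copy of $Y^{k-1}$ depends only on this induced orientation of $C_{(k)}$, and that the caps transform the same way. I would try to arrange this first by fixing orientations of every $Y^{k-1}\subset C^k$ once via the canonical degree-one map $\pi$.
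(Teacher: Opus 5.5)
Your proposal is correct, and up to (b) it is the paper's argument. You use the same $N$ (copies of $Y^{m-1}$ in the $m$-cubes, capped by copies of $X^{m-1}$ on $\partial P$), the same recursive definition of $N_{Q_{(k)}}$ as the face term of $\partial N_{Q_{(k+1)}}$ plus caps, and you get (b) from (\ref{Y-bndry}). The genuine difference is in (c).

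The paper never tracks orientations. It reduces to sequences that differ in the cube directly above $C_{(k)}$, expands $0=\partial N$ or $0=\partial^2 N_{Q_{(k+2)}}$ using (b) twice, and observes that only $N_{Q_{(k)}}$ and $N_{Q'_{(k)}}$ meet the interior of $C_{(k)}$ (by the pseudomanifold axiom, resp.\ Lemma \ref{face-count}), so they must cancel. This is purely formal, but strictly it only controls cells in the interior of $C_{(k)}$. The caps in faces of $C_{(k)}$ lying in $\partial P$ cancel only because their coefficients are forced by the $Y$-parts.

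You instead show that $N_{Q_{(k)}}$ depends only on the orientation of $C_{(k)}$ induced along the flag. After that, (c) is just $\partial^2=0$ in the cellular chain complex of $C_{(j+1)}$, or orientability of $P$ when $j=m$. This treats caps and $Y$-parts uniformly, handles a difference at any position without the paper's reduction, and gives an explicit formula for every $N_{Q_{(k)}}$.

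The price is the bookkeeping you left open, and it is cleaner to do it through $X$ than through $\pi$. For an oriented cube $D$, give $X(D)$ the orientation of $D$, and set $Y(D):=\partial X(D)-\sum_F X(\partial_F D)$ and $\tilde N(D):=Y(D)+\sum_{F\subset\partial P}X(\partial_F D)$, where $\partial_F D$ denotes the face $F$ with its induced orientation. The terms $X(\partial_G\partial_F D)$ with $G\not\subset\partial P$ cancel in pairs by $\partial^2=0$ on the faces of $D$. This gives $\partial\tilde N(D)=-\sum_{F\in\partial_{rel}D}\tilde N(\partial_F D)$, hence $N_{Q_{(k)}}=(-1)^{m-k}\tilde N(C_{(k)})$ with $C_{(k)}$ carrying its flag-induced orientation. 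The uniform sign is harmless.

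One minor slip: the index at which the two sequences differ satisfies $k<j\le m$, not $k\le j<m$.
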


\begin{proof}
    For each top dimensional cube $C_{(m)} \in P^{(m)}$, take a copy of $Y^{m-1}$ inside $C_{(m)}$. Glued together in the same way as all the cubes in $P^{(m)}$ are glued together to create $P$, we obtain a compact cubical $(m-1)$-dimensional pseudomanifold $N'$ with boundary lying in $\partial P$. For each $C_{(m-1)} \in P^{(m-1)} \cap \partial P$, the part of the boundary of $N'$ contained in $C_{(m-1)}$ is $Y^{m-1} \cap C^{m-1}$, which is isomorphic to $Y^{m-2}$ by Lemma \ref{face-lem}. Therefore, if we glue a copy of $X^{m-1}$ in each $C_{(m-1)} \in P^{(m-1)} \cap \partial P$, we fill the boundary of $N'$ and get a closed cubical $(m-1)$-dimensional pseudomanifold $N$. Since $P$ is $G$-orientable, so is $N$. 
    
    For each $C_{(m)} \in P^{(m)}$, let $N_{C_{(m)}} := N \cap C_{(m)}$. It follows directly from the construction of $N$ that we can give each  $N_{C_{(m)}}$ a chain structure (using the aforementioned chain structures on $Y^{m-1}$ and $X^{m-1}$) such that $\sum_{C_{(m)} \in P^{(m)}} N_{C_{(m)}}$ is a fundamental class in $H_{m-1}(N;G)$, hence (a) holds.

    Now, suppose $N_{Q_{(k)}}$ is defined for all $C_{(k)}$-sequences with $k>2$ and $C_{(k)} \not\in \partial P$. Let $Q_{(k-1)} = Q_{(k)} + C_{(k-1)}$ be a $C_{(k-1)}$-sequence with $C_{(k-1)} \not\in \partial P$. Using the chain structure on $Y^{k-1}$ which corresponds to that of $N_{Q_{(k)}}$, let $N_{Q_{(k-1)}}$ be the $(k-2)$-chain appearing in equation (\ref{Y-bndry}) which lies in the face $C_{(k-1)}$ (that is, $Y^{k-1} \cap F^{k-1}$ where $F$ is the face $C_{(k-1)}$ of $C_{(k)}$), plus corresponding copies of $X^{k-2}$ in any of the $(k-2)$-faces of $C_{(k-1)}$ which lie in $\partial P$. Doing this inductively, we get $(k-1)$-chains $N_{Q_{(k)}}$ for all $2 \leq k \leq m$. The equality
    $$
    \partial N_{Q_{(k)}} = \sum_{C_{(k-1)} \in \partial_{rel} C_{(k)}} N_{Q_{(k)} + C_{(k-1)}}
    $$
    follows from equation (\ref{Y-bndry}). The reason we ignore the cubes which lie in $\partial P$ is because the copies of $Y^{m-2}$ that lie in the $(m-1)$-dimensional cubes which make up the boundary $\partial P$ are filled with copies of $X^{m-1}$, and by Lemma \ref{face-lem} this means the copies of $Y^{k-1}$ which lie in the $k$-dimensional cubes in $\partial P$ are filled with copies of $X^k$. Hence, (b) is satisfied.

    Now let's prove (c). Suppose $Q_{(k)}$ and $Q'_{(k)}$ are two $C_{(k)}$-sequences which differ at only one entry. We want to prove
    $$
    N_{Q_{(k)}} + N_{Q'_{(k)}} = 0.
    $$
    We may assume without loss of generality that they differ at the $(k+1)$-dimensional cube containing $C_{(k)}$; since if the above equality is true, it is also true when we add lower-dimensional cubes to both $Q_{(k)}$ and $Q'_{(k)}$. Hence, there are only two cases to consider. The first is when $Q_{(k)}$ and $Q'_{(k)}$ differ at the top-dimensional cubes, i.e. say the sequences look like
    $$
    Q_{(k)} = \{ C_{(m-1)} \subset C_{(m)}\}
    $$
    and 
    $$
    Q'_{(k)} = \{C_{(m-1)} \subset C'_{(m)}\}.
    $$
    By our construction of $N$, we have the chain equality
    \begin{align*}
    0 &= \partial N\\
    &= \partial \left(\sum_{C_{(m)} \in P^{(m)}} N_{C_{(m)}}\right)\\
    &= \sum_{C_{(m)} \in P^{(m)}} \sum_{C_{(m-1)} \in \partial _*C_{(m)}} N_{C_{(m)} + C_{(m-1)}}.
    \end{align*}
    Since $P$ is a pseudomanifold, the cube $C_{(m-1)}$ appearing in $Q_{(k)}$ and $Q'_{(k)}$ appears only in the boundary of $C_{(m)}$ and $C'_{(m)}$. Hence,  $N_{Q_{(k)}}$ and $N_{Q'_{(k)}}$ are the only chains in the above equation whose image intersects the interior of $C_{(m-1)}$; since the equality holds as chains, this is only possible if
    $$
    N_{Q_{(k)}} + N_{Q'_{(k)}} = 0.
    $$
    The only other case to consider is when $Q_{(k)}$ and $Q'_{(k)}$ look like
    $$
    Q_{(k)} = \{C_{(k)} \subset C_{(k+1)} \subset C_{(k+2)} \subset \dots \subset C_{(m)} \}
    $$
    and
    $$
    Q'_{(k)} = \{C_{(k)} \subset C'_{(k+1)} \subset C_{(k+2)} \subset \dots \subset C_{(m)} \}.
    $$
    Let $Q_{(k+2)}$ be the sequence $\{C_{(k+2)} \subset \dots \subset C_{(m)}\}$. We have the chain equality
    \begin{align*}
    0 &= \partial^2N_{Q_{(k+2)}}\\
    &= \sum_{C_{(k+1)} \in \partial_{rel} C_{(k+2)}} \sum_{C_{(k)} \in \partial_{rel} C_{(k+1)}} N_{Q_{(k+2)} + C_{(k+1)} + C_{(k)}}.
    \end{align*}
    By Lemma \ref{face-count}, the cube $C_{(k)}$ appearing in $Q_{(k)}$ and $Q'_{(k)}$ is contained in exactly two of the $(k+1)$-faces of $C_{(k+2)}$; namely $C_{(k+1)}$ and $C'_{(k+1)}$. Hence, $N_{Q_{(k)}}$ and $N_{Q'_{(k)}}$ are the only chains in the above equation whose image intersects the interior of $C_{(k)}$; since the equality holds as chains, this is only possible if
    $$
    N_{Q_{(k)}} + N_{Q'_{(k)}} = 0.
    $$
    Therefore, we have proven (c).
\end{proof}

We will also need the following to prove the decomposition lemma.

\begin{lem}\label{R-complex}
    Inside the $m$-pseudomanifold $P$ from Lemma \ref{cube-decomp-lem}, there exists an $m$-pseudomanifold $R$ and a map $f:R \cup \partial P \to P^{(1)} \cup \partial P$ such that
    \begin{enumerate}
        \item\label{R-boundary} $\partial R = N - \partial P$;
        \item\label{R-map} For any top dimensional cube $C_{(m)}$ of $P$, $f(N_{C_{(m)}})$ consists of the $2^{m-1}m$ edges of $C_{(m)}$ plus the $(m-1)$-faces of $C_{(m)}$ lying in $\partial P$;
        \item For any cube $C_{(m-1)}$ in $\partial P$, $f|_{C_{(m-1)}} = \pi$ (where the map $\pi$ was defined in Section \ref{cube-subsec}).
    \end{enumerate}
\end{lem}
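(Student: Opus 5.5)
The natural choice is $R := \bigcup_{C_{(m)} \in P^{(m)}} \bigl(Z^m\bigr)_{C_{(m)}}$: in each top-dimensional cube $C_{(m)}$ take the copy of $Z^m$ (the closed tubular neighbourhood of the $1$-skeleton, i.e.\ the complement of the interior of $X^m$), and glue these copies exactly as the cubes of $P$ are glued. The map $f$ is then defined cube by cube by the canonical map $\pi$ of Section \ref{cube-subsec}. The plan is to check three things in turn: that $R$ is a cubical $m$-pseudomanifold, that its boundary is $N - \partial P$, and that the cubewise definition of $f$ is consistent and has the stated images.

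\textbf{$R$ is a pseudomanifold and item (\ref{R-boundary}) holds.} Within a single cube, $Z^m = C^m \setminus \operatorname{int} X^m$ with the natural cubical subdivision at $\pm\frac12$. Its topological frontier consists of two parts: $Y^{m-1} = X^m \cap Z^m$, and the part lying in $\partial C^m$. By Lemma \ref{face-lem}, $Z^m \cap F^\ell \approx Z^\ell$ for every face $F^\ell$, so the copies in adjacent top cubes agree on shared $(m-1)$-faces and glue consistently. After gluing, any face $C_{(m-1)} \not\subset \partial P$ is shared by exactly two top cubes, so its portion $Z^{m-1}$ becomes interior to $R$. What remains of $\partial R$ is therefore the union of the copies of $Y^{m-1}$, which is $N'$, together with the copies of $Z^{m-1}$ in the faces $C_{(m-1)} \subset \partial P$. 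Since $N = N' \cup \bigcup_{C_{(m-1)}\subset\partial P} X^{m-1}$, and $Z^{m-1}$ meets $X^{m-1}$ only along $Y^{m-2}$, I read $\partial R = N - \partial P$ as ``the part of $N$ not lying in $\partial P$'' (that is, $N'$), with the $Z^{m-1}$ pieces absorbed into $\partial P$. The three pseudomanifold axioms then follow from those of $P$ together with the local structure of $Z^m$, which is itself a pseudomanifold (a thickened graph); strong connectivity is inherited from $P$.

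\textbf{Defining $f$ and items (\ref{R-map}) and (3).} On $R \cap C_{(m)}$, set $f = \pi|_{Z^m}$, which maps $Z^m$ onto the $1$-skeleton of $C_{(m)}$. On $\partial P$, set $f|_{C_{(m-1)}} = \pi$ for the $(m-1)$-dimensional $\pi$, which gives (3) by definition. Consistency on overlaps follows because $\pi$ acts coordinatewise by $\mu$, so its restriction to a face $F^\ell$ is the $\ell$-dimensional $\pi$; hence all the definitions agree. For (\ref{R-map}), note that $N_{C_{(m)}}$ consists of $Y^{m-1}$ together with the caps $X^{m-1}$ in boundary faces. Applying $\pi$ to $Y^{m-1} \subset Z^m$ gives $(C^m)^{(1)}$: every point of $Y^{m-1}$ has at least $m-1$ coordinates with absolute value $\geq \frac12$, and conversely every edge point is hit, since $\mu$ maps $[-\frac12,\frac12]$ onto $[-1,1]$ and $Y^{m-1}$ contains the points with the relevant pattern. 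That is the $2^{m-1}m$ edges. The cap $X^{m-1}$ in a boundary face is stretched by $\pi$ onto the whole face, which gives the $(m-1)$-faces in $\partial P$.

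\textbf{Main obstacle.} I expect the delicate step to be the boundary bookkeeping in (\ref{R-boundary}): interpreting $N - \partial P$ correctly, and verifying that after gluing, the interior faces of the $Z^m$ copies cancel exactly while the $Y^{m-1}$ faces survive. I would handle this at the chain level, computing $\partial Z^m = Y^{m-1} + \sum_{F} Z^m\cap F$ with $G$-coefficients and orienting consistently with (\ref{Y-bndry}). The cancellation of the $Z^m \cap F$ terms across interior faces is then argued exactly as in part (c) of Lemma \ref{cube-decomp-lem}.
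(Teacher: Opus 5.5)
Your construction is the paper's: $R$ is the union of copies of $Z^m$ glued as the top cubes of $P$ are, and $f$ is $\pi$ applied cube by cube. Your checks (facewise compatibility of $\pi$, $\pi(Y^{m-1})$ being the $2^{m-1}m$ edges, the caps $X^{m-1}$ stretching onto boundary faces) supply what the paper calls straightforward.

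The one thing to fix is your reading of item (\ref{R-boundary}). It should be read as the chain identity $\partial R = \pm(N - \partial P)$, which is what later gives $h_*([N]) = h_*([\partial P])$. It follows from your own computation $\partial R = N' + \sum Z^{m-1}$, because each boundary $(m-1)$-cube is the chain $X^{m-1} + Z^{m-1}$, so the caps cancel in $N - \partial P$. The set-theoretic reading ``$\partial R = N'$ with the $Z^{m-1}$ pieces absorbed into $\partial P$'' is not accurate, since those pieces really are part of $\partial R$.
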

\begin{proof}
    Let $R$ be the cubical complex formed by gluing a copy of $Z^{m}$ in every top dimensional cube $C_{(m)}$ of $P$ (with $G$-orientation included from the orientation on $P$). Within each of these cubes, we have the map $\pi|_Z: Z \to (C_{(m)})^{(1)}$ defined in Section \ref{cube-subsec}, and these maps can be glued together to form a map $R \to P^{(1)}$. These maps agree with the maps $\pi: C_{(m-1)} \to C_{(m-1)}$ in each $(m-1)$-dimensional cube in $\partial P$, so we can extend our map $R \to P^{(1)}$ to a map $f: R \cup \partial P \to P^{(1)} \cup \partial P$. It is straightforward to verify that $R$ and $f$ satisfy the required properties.
\end{proof}

\subsection{Decomposition Lemma}
Our goal now is to construct a decomposition of our $d$-dimensional manifold $S$ into chains which have similar properties to the chains decomposing the cubical pseudomanifold $N$ in Lemma \ref{cube-decomp-lem}. This will be an instrumental tool in the proof of our main result. The way we will connect Lemma \ref{cube-decomp-lem} and the manifold $S$ is through the filling radius of $S$, the definition of which we now recall.

\begin{definition}
    The \emph{Kuratowski embedding} $i: S \hookrightarrow L^\infty(S)$ sends $x \in S$ to the function $d(x,\cdot): S \to \mathbb{R}$.  The \emph{filling radius} of $S$, denoted $\FillRad(S)$, is the infimal $r>0$ such that the cycle representing $i_*([S])$ in $L^\infty(S)$ is null-homologous (with $G$-coefficients) within its neighbourhood of radius $r$.
\end{definition}

We will often abuse notation and denote by $[S]$ the chain $i_*([S]) \subset L^\infty(S)$.

\begin{rem}\label{Kura}
    The Kuratowski embedding $i$ is an \emph{isometric imbedding}, meaning that for all $x,y \in S$, we have
    $$
    d_{S}(x,y) = d_{L^\infty}(x,y).
    $$
\end{rem}

The following theorem is an important result in metric geometry. The first proof was given by Gromov, and the best known constant was proven by Nabutovsky.

\begin{thm}[Gromov  \cite{gromov1983filling}, Nabutovsky \cite{nabutovsky2023linear}]\label{fillrad-thm}
    The inequality
    $$
    \FillRad(S) \leq \frac{d}{2} \Vol_d(S)^{\frac{1}{d}}
    $$
    holds for any closed $d$-dimensional manifold $S$.
\end{thm}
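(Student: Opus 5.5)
Since this is a classical result quoted from the literature, the paper will simply cite it. If I had to reprove it, I would not go through Gromov's original argument. Instead I would follow the more recent ``local filling'' strategy in the spirit of Wenger, Guth and Nabutovsky. It gives a linear-in-$d$ constant, and with careful bookkeeping it should recover the stated $\frac{d}{2}$. The setting is the image $[S]$ of $S$ under the Kuratowski embedding into $L^\infty(S)$. The key structural input is that $L^\infty(S)$ is a Banach space, so any chain can be \emph{coned} to a point: if $\sigma$ lies in a ball of radius $\rho$ about a point $x$, then the straight-line cone $x * \sigma$ lies in the same ball and satisfies $\partial(x*\sigma) = \sigma - x*\partial\sigma$. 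By Remark \ref{Kura}, distances between points of $S$ computed in $S$ and in $L^\infty$ agree, so ball estimates made intrinsically on $S$ transfer to the ambient space.

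The plan is an induction on dimension of the following stronger statement: every $k$-cycle $z$ in $L^\infty$ (with $G$-coefficients) of mass $V$ bounds a chain within distance $c_k V^{1/k}$ of $z$. The steps are as follows.

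\begin{enumerate}[label=(\roman*)]
\item Set $r$ to be a fixed multiple of $V^{1/k}$. For each point $x$ of $z$, consider the function $\rho \mapsto \Vol(z \cap B(x,\rho))$ on $\rho\in(0,r]$. Comparing its growth with $\rho^k$, one finds a radius $\rho_x \le r$ at which the slice $z\cap \partial B(x,\rho_x)$ has $(k-1)$-mass at most $C\,\Vol(z\cap B(x,\rho_x))^{(k-1)/k}$.
\item To get this, use the coarea inequality for the $1$-Lipschitz function $d(x,\cdot)$, exactly as the coarea inequality was used in the proof of Proposition \ref{ABG-prop}. If no such $\rho$ existed, a differential inequality for the volume function would force $\Vol(z\cap B(x,r))$ to exceed $V$, a contradiction.
\item Apply the induction hypothesis to the $(k-1)$-cycles formed by these slices, filling each within distance $c_{k-1}(\text{slice mass})^{1/(k-1)}$ of itself.
\item Cut $z$ into the pieces $z\cap B(x,\rho_x)$, chosen by a Vitali-type covering argument. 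Close each piece up with the filling of its slice, and cone the resulting small cycle to $x$. The cone stays within distance $\rho_x + c_{k-1}(\cdots)$ of $z$.
\item Summing the cones and the slice fillings produces a filling of $z$ modulo an error cycle. The error consists of the slice fillings counted with opposite signs from neighbouring pieces, and this bookkeeping is what the cone identity above is for.
\item The error cycle has strictly smaller mass, so the construction can be iterated. Its contributions to the filling distance form a geometric series, and summing the series gives $c_k$ in terms of $c_{k-1}$.
\end{enumerate}

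Applying the statement with $k=d$ and $z=[S]$ gives $\FillRad(S)\le c_d\Vol_d(S)^{1/d}$.

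I expect the main obstacle to be the constant. A naive induction multiplies constants at each dimension and gives growth exponential, or even factorial, in $d$; it does not give $\frac{d}{2}$. To get a linear constant, I would follow Nabutovsky's refinement. The key point is to avoid recursively filling the slices. Instead, choose the radii so that the volume ratio $\Vol(B(x,\rho))/\rho^d$ is controlled, and cone the \emph{whole} cycle in one stroke after a single coarea selection. Then each dimension contributes only an additive $\tfrac12$ to the radius, rather than a multiplicative factor. If that bookkeeping turned out to be too delicate, I would settle for a bound $c_d\Vol(S)^{1/d}$ with some explicit dimensional $c_d$. Such a bound suffices for how Theorem \ref{fillrad-thm} is used later in the paper, at the cost of worsening the constant in (\ref{dependency}).
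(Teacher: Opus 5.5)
\paragraph{The missing constant.} The paper does not prove this statement; it imports it from \cite{gromov1983filling, nabutovsky2023linear}. You are right that the paper only uses it to turn ``$\Vol_d(S)<v_1$'' into ``$\FillRad(S)<\nu$''. So any bound $c_d\Vol_d(S)^{1/d}$ would serve the paper, at the cost of changing (\ref{dependency}).

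As a proof of the theorem as stated, however, your proposal has a gap: nothing in it produces the constant $\frac d2$. The Wenger-type induction in (i)--(vi) gives a constant $c_d$ defined recursively from lower-dimensional constants. That constant is far from linear in $d$, as your last paragraph concedes; this contradicts your opening claim that this route gives a linear constant.

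The ``refinement'' you then describe is not an argument. Controlling $\Vol(B(x,\rho))/\rho^d$ and coning the whole cycle in one stroke bounds the filling distance only by the diameter of the cycle, and volume does not control diameter (think of a long thin $S$). So a decomposition is unavoidable, and that is exactly where the constants compound.

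Nabutovsky gets the linear constant by a different route. He sharpens Papasoglu's proof \cite{papasoglu2020uryson} of Guth's theorem: if every ball of radius $r$ in $S$ has volume less than $(r/c(d))^d$, then $S$ maps to a $(d-1)$-complex with every fiber contained in a ball of radius $r$. His sharpening gives $c(d)\le \frac d2$. He then uses Gromov's observation that such a map gives $\FillRad(S)\le r$, with $r$ chosen so that $(r/c(d))^d$ is just above $\Vol_d(S)$.

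\paragraph{The induction does not close.} Even for a non-explicit $c_d$, the induction as you formulated it fails. Your inductive hypothesis controls only how far a filling strays from its cycle, not its mass.

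Step (vi) needs the error cycle to have mass at most a fixed fraction of $\mathbf{M}(z)$, and that error cycle contains the slice fillings from step (iii). Without a bound of the form $\mathbf{M}(\text{filling})\le D_{k-1}\,\mathbf{M}(\text{slice})^{k/(k-1)}$, those fillings could be arbitrarily massive. Then neither the mass decrease nor the geometric series for the distances follows.

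Wenger's argument carries the isoperimetric mass bound and the distance bound through the induction together. It also chooses $C$ in step (i) small relative to $D_{k-1}$, so that each slice filling costs only a small fraction of the mass of its piece.

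Relatedly, the slice fillings do not cancel ``with opposite signs from neighbouring pieces''. Slices of different balls lie on different spheres, so they remain in the error, which is precisely why their mass must be bounded.

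Finally, the coarea and slicing steps need a framework such as integral currents (mod $2$ when $S$ is nonorientable). You would then have to convert back to a singular filling, since the paper defines $\FillRad$ via singular chains.
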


\begin{rem}\label{pseudo-fill}
    In the definition of filling radius, we used singular homology. This means, if $\nu >\FillRad(S)$, then there exists a singular chain in $L^\infty(S)$  whose boundary is $[S]$ and whose image sits inside the $\nu$-neighbourhood of $[S]$. Instead of expressing this chain as a formal sum of singular simplices, we can represent it by a map from a cubical pseudomanifold into $L^\infty(S)$ (see \cite[pp. 108-109]{hatcher2005algebraic}).
\end{rem}

We now prove the decomposition lemma, the main result of this section. It takes a similar form to Lemma \ref{cube-decomp-lem}, but with extra quantitative control.
\begin{lem}[Decomposition Lemma]\label{decomp-lem}
    Given a dimension $d$ and a constant $\delta > 0$, for any $\nu < \delta\, 2^{-(d+1)}(d+1)^{-1}$ the following holds. Suppose $S$ is any closed, $G$-orientable, $d$-dimensional manifold with $\FillRad(S) < \nu$. Then there exists a $G$-orientable, $(d+1)$-dimensional cubical pseudomanifold $P$, where for each $k$-cube $C_{(k)}$ in $P^{(k)} \setminus \partial P$ (with $k \geq 2$) and each $C_{(k)}$-sequence $Q_{(k)}$, there exists a $(k-1)$-chain $S_{Q_{(k)}}$ in $S$ such that: 
    \begin{enumerate}[label=(\alph*)]
        \item The chain $\sum_{C_{(d+1)} \in P^{(d+1)}} S_{C_{(d+1)}}$ represents a fundamental class in $H_d(S; G)$;
        \item We have the equality
        $$
        \partial S_{Q_{(k)}} = \sum_{C_{(k-1)} \in \partial_{rel} C_{(k)}} S_{Q_{(k)} + C_{(k-1)}}
        $$
        as chains, where $\partial_{rel} C_{(k)}$ denotes the $(k-1)$-cubes in $\partial C_{(k)}$ which do not lie inside $\partial P$ (though they may intersect $\partial P$).
        \item If $Q_{(k)}$ and $Q'_{(k)}$ are two $C_{(k)}$-sequences which differ at only one entry, then
        $$
        S_{Q_{(k)}} + S_{Q'_{(k)}} = 0.
        $$
        \item $S_{Q_{(k)}}$ is contained in a ball of radius $\delta$;
    \end{enumerate}
    
\end{lem}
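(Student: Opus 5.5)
The plan is to realize the filling of $[S]$ in $L^\infty(S)$ by a cubical pseudomanifold, subdivide it finely, and pull the chains of Lemma \ref{cube-decomp-lem} back to $S$ through a degree-one map $N \to S$ built from Lemma \ref{R-complex}.

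\textbf{Step 1: realize the filling.} Since $\FillRad(S) < \nu$, Remark \ref{pseudo-fill} gives a $G$-orientable cubical $(d+1)$-pseudomanifold $P$ and a map $\Phi : P \to B_\nu([S]) \subset L^\infty(S)$. On the boundary, $\Phi|_{\partial P} : \partial P \to S$ has degree one, meaning it pushes the fundamental class of $\partial P$ to $[S]$.

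\textbf{Step 2: subdivide.} Replace $P$ by a fine cubical subdivision, so that $\Phi$ maps every cube of $P$ into a set of diameter less than $\nu$. After adjusting $\Phi$ on $\partial P$ by a homotopy within $S$ (for instance using a fine triangulation of $S$), we may assume $\partial P \cong$ a cubulation on which $\Phi$ is cellular-compatible.

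\textbf{Step 3: define the map $g : N \to S$.} Apply Lemma \ref{cube-decomp-lem} to $P$ to get $N$ and the chains $N_{Q_{(k)}}$. We define $g$ as follows.
\begin{enumerate}
\item On $\partial P$, set $g = \Phi$. This determines $g$ on the caps $X^{d}$ lying in $\partial P$, composed with $\pi$ via Lemma \ref{R-complex}(3).
\item On vertices $v$ of $P$ not in $\partial P$, choose $g(v) \in S$ to be a point within distance $\nu$ of $\Phi(v)$; such a point exists because $\Phi(P) \subset B_\nu([S])$. By Remark \ref{Kura}, adjacent vertices then map to points at distance less than $3\nu$ in $S$.
\item Extend $g$ over edges by minimizing geodesics in $S$. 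This defines $g$ on $P^{(1)} \cup \partial P$.
\end{enumerate}
Finally, set $g := g \circ f$ on $N$, using the map $f$ of Lemma \ref{R-complex}. The identity $\partial R = N - \partial P$ shows that $N$ is homologous to $\partial P$ in $R \cup \partial P$. Pushing forward by $g \circ f$, whose image lies in $S$, gives $g_*[N] = \Phi_*[\partial P] = [S]$.

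\textbf{Step 4: define the chains and check (a)--(d).} Set $S_{Q_{(k)}} := g_*(N_{Q_{(k)}})$. Properties (a), (b) and (c) follow immediately by pushing forward the chain identities of Lemma \ref{cube-decomp-lem}, since pushforward commutes with boundary and is additive.

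For (d): by Lemma \ref{R-complex}(2), $f$ maps $N_{Q_{(k)}} \subset C_{(d+1)}$ into the edges of a single cube together with its boundary faces in $\partial P$. The image under $g$ of such a cube's $1$-skeleton is controlled by the number of edges. Any vertex of the cube is joined to any other by a path of at most $d+1$ edges, and each edge has length less than $3\nu$, so the image has diameter at most $3(d+1)\nu$. The boundary faces add at most $\nu$ more. With the crude count of $2^{d}(d+1)$ edges per cube, the bound $\nu < \delta\,2^{-(d+1)}(d+1)^{-1}$ gives ample room for containment in a ball of radius $\delta$.

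\textbf{Main obstacle.} The delicate step is making the vertex choices and the boundary map compatible. A vertex of $P$ lying on $\partial P$ must be sent to $\Phi(v)$, and the boundary pieces must simultaneously be small and glue correctly with the map $\pi$. I would handle this by subdividing $P$ enough that $\Phi$ restricted to each boundary face has image of diameter at most $\nu$ in $S$, before defining $g$.
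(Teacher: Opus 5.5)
Your proposal is correct and follows the paper's proof essentially step for step: the same filling pseudomanifold and subdivision, the nearest-point choice on vertices with minimizing geodesics on edges, the composite $h=g\circ f$, degree one via $\partial R = N-\partial P$, and $S_{Q_{(k)}}=h_*(N_{Q_{(k)}})$. For the degree-one step you should say explicitly that $h|_{\partial P}=\Phi\circ\bar\pi$ and that $\bar\pi$ has degree one.

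The only real variation is in (d). You subdivide only to scale $\nu$, so edges have length $<3\nu$, and then use that any two vertices of the cube are at most $d+1$ edges apart. This puts the image in a ball of radius $3(d+1)\nu$ about the image of one vertex; phrase it as a radius bound, not a diameter bound. That argument works and is sharper than the paper's sum over all $2^d(d+1)$ edges.

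However, your remark that the crude $2^d(d+1)$-edge count also leaves "ample room" is wrong. With your $3\nu$ edge bound that count only gives roughly $\tfrac32\delta$. This is why the paper subdivides to an arbitrarily small $\varepsilon$, getting $2\nu+\varepsilon$ per edge; in your version it is the path count that closes (d).
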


\begin{proof}
Suppose we have chosen two parameters $\nu > \FillRad(S)$ and $\varepsilon > 0$ arbitrarily; we will later select these parameters carefully. By definition of filling radius and Remark \ref{pseudo-fill}, there is a $(d+1)$-dimensional cubical pseudomanifold $P$ and a map $e: P \to L^\infty(S)$ such that $e_*([\partial P]) = [S]$ and the image of $e$ lies in a $\nu$-neighbourhood of $S$. We can subdivide $P$ if necessary to assume that the image of each $(d+1)$-cube in $P^{(d+1)}$ has diameter less than $\varepsilon$ with the metric on $L^\infty(S)$. This implies the images of the $d$-cubes in $\partial P$, which make a cubical structure on $S$, each have diameter less than $\varepsilon$ (with the metric on $S$ by Remark \ref{Kura}).

By applying the lemmas to the $(d+1)$-pseudomanifold $P$, let $N$, $N_{Q_{(k)}}$, $R$, and $f: R \cup \partial P \to P^{(1)} \cup \partial P$ be as in Lemmas \ref{cube-decomp-lem} and \ref{R-complex}. We want to now define a map $g: P^{(1)} \cup \partial P \to S$. To map the $1$-skeleton $P^{(1)}$ into $S$, we start by mapping the $0$-skeleton $P^{(0)}$. For any $p \in P^{(0)}$, we define $g(p)$ to be any point in $S \subset L^\infty(S)$ which is closest to $e(p)$, where we recall $e: P \to L^\infty(S)$ is our map describing the filling of $S$ in $L^\infty(S)$ within radius $\nu$. Hence,  $d_{L^\infty}(e(p),g(p)) < \nu$. Now, extend this map to $P^{(1)}$ by mapping the edge connecting two adjacent vertices $p_1$ and $p_2$ in $P$ to a shortest path in $S$ connecting $g(p_1)$ and $g(p_2)$, chosen arbitrarily. In particular, after deforming $e$ if necessary so that it takes every edge of $\partial P$ to a minimizing segment of $S$, we can assume that $g$ and $e$ agree on the edges of $\partial P$. So we define $g$ on $\partial P$ as $e: \partial P \to S$, and overall we have a defined map $g: P^{(1)} \cup \partial P \to S$. Finally, denote $h := g \circ f: R \cup \partial P \to S$.

We show the restricted map $h|_{N}: N \to S$ is degree-one. Property (\ref{R-boundary}) of Lemma \ref{R-complex} says that $N$ and $\partial P$ are homologous within $R \cup \partial P$, with their difference bounding $R$. Hence,
$$
h_{*}([N]) = h_*([\partial P]) = (e_* \circ \overline{\pi}_*)([\partial P]),
$$
where $\overline{\pi}: \partial P \to \partial P$ is the map which applies $\pi: C_{(d)} \to C_{(d)}$ on every $d$-cube in $\partial P$. The second equality holds because $h|_{\partial P} = e \circ \overline{\pi}$. Since each $\pi$ is degree-one, so is $\overline{\pi}$, and therefore
$$
(e_* \circ \overline{\pi}_*)([\partial P]) = e_*([\partial P]) = [S].
$$

For any $k$, any $k$-cube $C_{(k)}$, and any $C_{(k)}$-sequence $Q_{(k)}$, define
$$
S_{Q_{(k)}} := h_*(N_{Q_{(k)}}).
$$ 
Properties (a)-(c) of the lemma follow directly from properties (a)-(c) of Lemma \ref{cube-decomp-lem} by simply applying $h_*$ (and for property (a) we use that $h$ is degree-one).

Lastly we need to prove property (d). The image of the chain $S_{Q_{(k)}}$ for any $C_{(k)}$-sequence $Q_{(k)}$ would be contained in $h(N_{C_{(k)}})$ (where $N_{C_{(k)}} := N \cap C_{(k)}$). Since this is contained in $h(N_{C_{(d+1)}})$ for some top dimensional cube $C_{(d+1)}$, it suffices to prove the diameter bound on any image $h(N_{C_{(d+1)}})$.
Pick an arbitrary cube $C_{(d+1)} \in P^{(d+1)}$. By Lemma \ref{R-complex}, the image $f(N_{C_{(d+1)}})$ consists of the $2^{d}(d+1)$ edges of $C_{(d+1)}$ plus the $d$-faces of $C_{(d+1)}$ lying in $\partial P$. Using Remark \ref{Kura} and the triangle inequality, we see that the image of each edge under $g$ has length bounded by
\begin{align*}
d_{S}(g(p_1),g(p_2)) &\leq d_{L^\infty}(g(p_1),e(p_1)) + d_{L^\infty}(e(p_1),e(p_2)) + d_{L^\infty}(e(p_2),g(p_2))\\
&< 2\nu + \varepsilon.
\end{align*}
Moreover, any $d$-face in the image $f(N_{C_{(d+1)}})$ is mapped to a $d$-cube in $S$ with diameter less than $\varepsilon$ by our previous refinement of $P$. Using the triangle inequality, we conclude that 
$$\diam(h(N_{C_{(d+1)}})) < 2^d(d+1)(2\nu + \varepsilon) + 2\varepsilon := \delta^*(\nu,\varepsilon).
$$
If we pick 
$$
\nu < \frac{\delta}{2^{d+1}(d+1)},
$$ then $\delta^*(\nu,\varepsilon) < \delta$ for all $\varepsilon$ sufficiently small. This proves property (d).
\end{proof}

\section{Filling Submanifolds}\label{proof-sec}

We are now in a position to fill $S$ within $M$. Recall that there exist constants
$(r_k,t_k)_{k=1}^{n-2}$ (with $r_k > 2t_{k-1}$) such that $M$ satisfies the filling conditions:
\begin{itemize}
    \item[(FC')] \emph{(Filling Conditions)} Any $k$-cycle which is contained in some ball of radius $r_k$ in $M$ is null-homologous with $G$-coefficients in the concentric ball of radius $t_k$.
\end{itemize}

The following proposition, together with Proposition \ref{ABG-prop} and Theorem \ref{fillrad-thm}, directly imply Theorem \ref{main-thm}.

\begin{prop}
    There exists a constant $\nu$, depending only on the dimension $n \geq 3$ and the filling conditions (FC'), such that the following holds.
    
    If $M$ is an $n$-manifold satisfying (FC'), and $S$ is a closed embedded $(n-2)$-submanifold (orientable if $M$ is) with $\FillRad(S) < \nu$, then $S$ is null-homologous within a neighbourhood of radius $2t_{n-2}$ in $M$.
\end{prop}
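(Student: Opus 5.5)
Set $d=n-2$ and apply the Decomposition Lemma (Lemma \ref{decomp-lem}) to $S$ with $\delta$ chosen as $\delta = \min_{1\le i\le n-2}\{r_i - 2t_{i-1}\}/2^{n-2}$ or similar, where $t_0=0$. The chains $S_{Q_{(k)}}$ are viewed inside $M$ via the embedding $S\subset M$; intrinsic diameter in $S$ bounds ambient diameter. We then build fillings inductively from the bottom skeleton upward, in the spirit of extending a map over skeleta. For each $k=2,\dots,d+1$ and each $C_{(k)}$-sequence $Q_{(k)}$ we construct a $k$-chain $F_{Q_{(k)}}$ in $M$. These chains must satisfy
\[
\partial F_{Q_{(k)}} = S_{Q_{(k)}} - \sum_{C_{(k-1)}\in\partial_{rel}C_{(k)}} F_{Q_{(k)}+C_{(k-1)}}.
\]
They must also satisfy the antisymmetry $F_{Q_{(k)}}+F_{Q'_{(k)}}=0$ whenever $Q_{(k)},Q'_{(k)}$ differ in one entry, and each $F_{Q_{(k)}}$ must lie within distance $\sim t_{k-1}$ of a fixed point $x_{C_{(k)}}$ (a point of the image of $S_{Q_{(k)}}$, or of the cube). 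At the top level, (a) gives
\[
\partial\Big(\sum_{C_{(d+1)}} F_{C_{(d+1)}}\Big) = [S] - \sum_{C_{(d+1)}}\sum_{C_{(d)}\in\partial_{rel}C_{(d+1)}} F_{C_{(d+1)}+C_{(d)}} .
\]
Each interior $d$-cube appears in exactly two top cubes, so by antisymmetry the double sum cancels. Hence $\sum F_{C_{(d+1)}}$ is a filling of $S$, and it stays within $t_{d}\le 2t_{n-2}$ of $S$.

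\textbf{Base case} ($k=2$). By (b), $S_{Q_{(2)}}$ has boundary $\sum S_{Q_{(2)}+C_{(1)}}$, but there are no $0$-level chains $S_{Q_{(1)}}$ in the lemma, so $S_{Q_{(2)}}$ is itself a $1$-cycle. It lies in a ball of radius $\delta\le r_1$, so (FC') with $k=1$ fills it by $F_{Q_{(2)}}$ in the concentric ball of radius $t_1$. \textbf{Inductive step.} Given fillings at level $k-1$, form
\[
z=S_{Q_{(k)}}-\sum_{C_{(k-1)}\in\partial_{rel}C_{(k)}} F_{Q_{(k)}+C_{(k-1)}} .
\]
Its boundary is computed from (b) and the inductive boundary formula: the $S$-terms cancel, leaving $\sum\sum F_{Q_{(k)}+C_{(k-1)}+C_{(k-2)}}$. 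This vanishes by antisymmetry together with Lemma \ref{face-count}, since each $(k-2)$-face of $C_{(k)}$ lies in exactly two $(k-1)$-faces. The cancellation also uses (c) on the $S$-level to justify antisymmetry for sequences differing at a lower entry. Care is needed with faces lying in $\partial P$, which are omitted from $\partial_{rel}$; this is consistent because (b) omits them too. Thus $z$ is a $(k-1)$-cycle. All its pieces lie within $\delta + t_{k-2}$ of a common point, which is less than $r_{k-1}$ provided $\delta\le r_{k-1}-2t_{k-2}$, allowing for center shifts of order $\delta$. So (FC') fills $z$ by $F_{Q_{(k)}}$ within a ball of radius $t_{k-1}$.

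\textbf{Enforcing antisymmetry.} Make the choices only for one representative of each class of sequences. Concretely, choose $F$ for a fixed sequence per cube and define the other by sign, using (c) and the fact that the defining data $z$ transforms by the same sign. This requires checking that $z_{Q'}=-z_Q$, which follows inductively from (c) for $S$ and antisymmetry of the lower $F$'s.

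I expect the main obstacle to be this bookkeeping. First, the sign-consistency of choices across different sequences must be maintained. Second, the radius budget must be tracked: each level's cycle must fit in a ball of radius $r_{k-1}$ despite accumulating fillings from the previous level. This is where the hypothesis $r_k>2t_{k-1}$ and the choice of $\delta$ (and hence $\nu=\delta 2^{-(d+1)}(d+1)^{-1}$) enter. The final neighbourhood radius $2t_{n-2}$ comes from the top filling of size $t_{n-2}$ plus the offset $\delta<t_{n-2}$ between its center and $S$.
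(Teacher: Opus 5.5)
Your proposal is correct and follows the paper's proof. The paper runs the same skeleton-by-skeleton induction, phrased in terms of the cycles $Z_{Q_{(k)}} = S_{Q_{(k)}} - \sum_{C_{(k-1)}} F_{Q_{(k)}+C_{(k-1)}}$ (your $z$) rather than their fillings, and uses the same cancellation (each interior $(k-2)$-face lies in exactly two $(k-1)$-faces, plus antisymmetry) and the same top-level sum over $P^{(d+1)}$. Your fixed cube centers $x_{C_{(k)}}$, the sign-consistent choice of fillings, and the remark on intrinsic versus ambient distance cover points the paper states without justification, and they give a radius at most $t_{n-2}+\delta \le 2t_{n-2}$.
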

\begin{proof}
    Let $d = n-2$. Pick $\delta>0$ such that
    \begin{equation}\label{delta}
    \delta < \min_{i \in \{1,\dots,d\}}\{r_i - 2t_{i-1}\}
    \end{equation}
    (when $i=1$, $t_{i-1}$ is understood to equal zero). With this $\delta$, pick the constant $\nu >0$ from Lemma \ref{decomp-lem}. Now suppose $S$ is a closed $d$-dimensional submanifold of $M$ with $\FillRad(S) < \nu$.

    Let $P$ be as in Lemma \ref{decomp-lem}. It will be helpful to define some new notation. Let $\mathcal{I}^k$ denote the $k$-cubes in $P^{(k)}$ which are not a subset of $\partial P$. And let $\mathcal{I}_{C_{(k)}}^m$, for $m < k$, denote the $m$-cubes in $\mathcal{I}^m$ which are contained in the $m$-skeleton of $C_{(k)}$. We ignore the cubes lying in $\partial P$ because we do not need to fill any cycles there during the proof, as they were already filled by our construction of $N$ (by gluing in copies of $X^d$). Consequently, all the $C_{(k)}$-sequences we consider will only contain cubes not contained in the boundary, i.e. will contain cubes in $\mathcal{I}^m$.
    
    We will prove by induction on $k \in \{2,\dots, d \}$ that for every $k$-cube $C_{(k)} \in \mathcal{I}^k$ and every $C_{(k)}$-sequence $Q_{(k)}$, there exists a $(k-1)$-cycle $Z_{Q_{(k)}}$ in $M$ such that the following conditions are satisfied:
    \begin{enumerate}[label=(\roman*)]
        \item Each $Z_{Q_{(k)}}$ is contained in a ball of radius $r_{k-1}$;
        \item For any $C_{(k+1)}$-sequence $Q_{(k+1)}$, 
        $$
        \partial S_{Q_{(k+1)}} = \sum_{C_{(k)} \in \mathcal{I}_{C_{(k+1)}}^{k}} Z_{Q_{(k+1)} + C_{(k)}}; 
        $$
        \item Suppose $Q_{(k)}$ and $Q'_{(k)}$ are two $C_{(k)}$-sequences which differ at only one entry. Then 
        $$
        Z_{Q_{(k)}} + Z_{Q'_{(k)}} = 0.
        $$
    \end{enumerate}

    We start with the $k=2$ case. For any $C_{(2)} \in \mathcal{I}^2$ and any $C_{(2)}$-sequence $Q_{(2)}$, simply let $Z_{Q_{(2)}} := S_{Q_{(2)}}$. Then $Z_{Q_{(2)}}$ is a $1$-cycle (see the equation defining $Y^{k-1}$ in Section \ref{fillrad-sec}), and condition (i) follows from property (d) of Lemma \ref{decomp-lem} along with inequality (\ref{delta}). Conditions (ii) and (iii) are the same as properties (b) and (c) respectively from Lemma \ref{decomp-lem}. 

    Now suppose for our induction hypothesis that for some $k \in \{2,\dots,d-1 \}$ and all $C_{(k)}$-sequences $Q_{(k)}$ there exist $(k-1)$-cycles $Z_{Q_{(k)}}$ satisfying (i)-(iii). We want to construct, for all $C_{(k+1)} \in \mathcal{I}^{k+1}$ and any $C_{(k+1)}$-sequence $Q_{(k+1)}$, the $k$-cycles $Z_{Q_{(k+1)}}$ satisfying (i)-(iii).

    Let $Q_{(k+1)}$ be a $C_{(k+1)}$-sequence. For any $C_{(k)} \in \mathcal{I}^k_{C_{(k+1)}}$, the $(k-1)$-cycle $Z_{Q_{(k+1)} + C_{(k)}}$ is contained in a ball of radius $r_{k-1}$ by (i) of the induction hypothesis. By (FC'), this means there exists a $k$-chain $F_{Q_{(k+1)} + C_{(k)}}$ contained in a ball of radius $t_{k-1}$ such that 
    $$
    \partial F_{Q_{(k+1)} + C_{(k)}} = Z_{Q_{(k+1)} + C_{(k)}},
    $$
    and by property (iii) of the induction hypothesis, we can do this in such a way that, for any $C_{(k)}$-sequences $Q_{(k)}$ and $Q'_{(k)}$ which differ at one entry, we have
    \begin{equation}\label{fills-cancel}
    F_{Q_{(k)}} + F_{Q'_{(k)}} = 0.
    \end{equation} 
    
    Define the $k$-chain
    $$
    Z_{Q_{(k+1)}} := S_{Q_{(k+1)}} - \sum_{C_{(k)} \in \mathcal{I}_{C_{(k+1)}}^{k}} F_{Q_{(k+1)}+C_{(k)}}.
    $$
    Applying property (ii) of the induction hypothesis, we see $\partial Z_{Q_{(k+1)}} = 0$ so $Z_{Q_{(k+1)}}$ is a cycle. The chain $S_{Q_{(k+1)}}$ is contained in a ball of radius $\delta$ by property (d) of Lemma \ref{decomp-lem}, and each $F_{Q_{(k+1)}+C_{(k)}}$ is contained in a ball of radius $t_{k-1}$, thus the cycle $Z_{Q_{(k+1)}}$ is contained in a ball of radius $\delta + 2t_{k-1}$. By inequality (\ref{delta}), this means $Z_{Q_{(k+1)}}$ is contained in a ball of radius $r_{k}$. This proves (i) is satisfied.

    To prove (ii), pick some arbitrary $C_{(k+2)} \in \mathcal{I}^{k+2}$ and a $C_{(k+2)}$-sequence $Q_{(k+2)}$. We have
    \begin{align*}
        &\sum_{C_{(k+1)} \in \mathcal{I}_{C_{(k+2)}}^{k+1}} Z_{Q_{(k+2)} + C_{(k+1)}} \\&= \sum_{C_{(k+1)} \in \mathcal{I}_{C_{(k+2)}}^{k+1}} \left(S_{Q_{(k+2)} + C_{(k+1)}} - \sum_{C_{(k)} \in \mathcal{I}_{C_{(k+1)}}^{k}} F_{Q_{(k+2)} + C_{(k+1)} + C_{(k)}} \right) \\
        &= \left(\sum_{C_{(k+1)} \in \mathcal{I}_{C_{(k+2)}}^{k+1}} S_{Q_{(k+2)} +C_{(k+1)}} \right)- \left(\sum_{C_{(k+1)} \in \mathcal{I}_{C_{(k+2)}}^{k+1}}\sum_{C_{(k)} \in \mathcal{I}_{C_{(k+1)}}^{k}} F_{Q_{(k+2)} + C_{(k+1)} + C_{(k)}}\right).
    \end{align*}
    The first term above equals $\partial S_{Q_{(k+2)}}$ by condition (b) of Lemma \ref{decomp-lem}. The second term equals zero. To see this, note that a $k$-cube $C_{(k)}$ which is in $\mathcal{I}_{C_{(k+2)}}^k$ appears in precisely two of the $(k+1)$-faces of $C_{(k+2)}$ by Lemma \ref{face-count}. Hence,
    \begin{align*}
        & \sum_{C_{(k+1)} \in \mathcal{I}_{C_{(k+2)}}^{k+1}}\sum_{C_{(k)} \in \mathcal{I}_{C_{(k+1)}}^{k}} F_{Q_{(k+2)} + C_{(k+1)} + C_{(k)}}\\ &= \sum_{C_{(k)} \in \mathcal{I}^{k}_{C_{(k+2)}}} \left( F_{Q_{(k+2)} + C_{(k+1)} + C_{(k)}} + F_{Q_{(k+2)} + C'_{(k+1)} + C_{(k)}}\right)\\
        &=0,
    \end{align*}
    with the last equality following from equation (\ref{fills-cancel}). 

    Finally, we prove property (iii). Suppose $Q_{(k+1)}$ and $Q'_{(k+1)}$ are two $C_{(k+1)}$-sequences which differ in exactly one spot. We have 
    \begin{align*}
        &Z_{Q_{(k+1)}} + Z_{Q'_{(k+1)}}\\ &= \left(S_{Q_{(k+1)}} + S_{Q'_{(k+1)}} \right) -\sum_{C_{(k)} \in \mathcal{I}^k_{C_{(k+1)}}} \left(F_{Q_{(k+1)}+ C_{(k)}} + F_{Q'_{(k+1)}+ C_{(k)}} \right)\\
        &= 0,
    \end{align*}
    with the last equality following from property (c) of Lemma \ref{decomp-lem} and equation (\ref{fills-cancel}). This finishes the induction proof.

    Therefore, we have a collection of $(d-1)$-cycles $Z_{Q_{(d)}}$, for all $C_{(d)}$-sequences $Q_{(d)}$, such that properties (i)-(iii) hold. We mostly reuse the same proof of the induction step one last time, with slight modifications, to complete the proof of the proposition. We can assume we have $n>3$ and $d > 1$; the $n=3,d=1$ case simply follows the strategy from the inductive proof.
    
    Each $Z_{Q_{(d)}}$ is contained in a ball of radius $r_{d-1}$ by (i). By (FC'), this means there exists a $d$-chain $F_{Q_{(d)}}$ contained in a ball of radius $t_{d-1}$ such that $\partial F_{Q_{(d)}} = Z_{Q_{(d)}}$. For each $C_{(d+1)} \in \mathcal{I}^{d+1}= P^{(d+1)}$, define the $d$-chain
    $$
    Z_{C_{(d+1)}} := S_{C_{(d+1)}} - \sum_{C_{(d)} \in \mathcal{I}_{C_{(d+1)}}^{d}} F_{C_{(d+1)} + C_{(d)}}.
    $$
    By property (ii), $Z_{C_{(d+1)}}$ is a cycle. The chain $S_{C_{(d+1)}}$ is contained in a ball of radius $\delta$ by property (d) of Lemma \ref{decomp-lem}, and each $F_{C_{(d+1)} + C_{(d)}}$ is contained in a ball of radius $t_{d-1}$ by property (i), thus the cycle $Z_{C_{(d+1)}}$ is contained in a ball of radius $\delta + 2t_{d-1}$. By inequality (\ref{delta}), this means $Z_{C_{(d+1)}}$ is contained in a ball of radius $r_{d}$. Hence, there exists a $(d+1)$-chain $F_{C_{(d+1)}}$ within a ball of radius $t_{d}$ such that
    $$
    \partial F_{C_{(d+1)}} = Z_{C_{(d+1)}}.
    $$
    We have
    \begin{align*}
        &\sum_{C_{(d+1)}\in P^{(d+1)}} Z_{C_{(d+1)}} \\&= \sum_{C_{(d+1)}\in P^{(d+1)}} \left(S_{C_{(d+1)}} -\sum_{C_{(d)} \in \mathcal{I}_{C_{(d+1)}}^{d}} F_{C_{(d+1)} + C_{(d)}}
        \right)\\
        &= \left(\sum_{C_{(d+1)}\in P^{(d+1)}} S_{C_{(d+1)}} \right) - \left(\sum_{C_{(d+1)}\in P^{(d+1)}} \sum_{C_{(d)} \in \mathcal{I}_{C_{(d+1)}}^{d}} F_{C_{(d+1)} + C_{(d)}}\right)
    \end{align*}
    The first term equals $[S]$ in homology by property (a) of Lemma \ref{decomp-lem}. The second term equals zero: since the $(d+1)$-dimensional cubical complex $P$ is a pseudomanifold, any $C_{(d)} \in \mathcal{I}_{C_{(d+1)}}^d$ is going to appear as a face of exactly two $(d+1)$-cubes in $P^{(d+1)}$, and this term will evaluate to zero using property (iii) in the same way as in the induction step. Therefore,
    $$
    \sum_{C_{(d+1)}\in P^{(d+1)}}Z_{C_{(d+1)}} = [S].
    $$
    Hence, the $(d+1)$-chain
    $$
    F := \sum_{C_{(d+1)}\in P^{(d+1)}} F_{C_{(d+1)}}
    $$
    has $[S]$ as its boundary, and the fact that each $F_{C_{(d+1)}}$ is in a ball of radius $t_d$ implies the chain $F$ lies within a neighbourhood of radius $2t_d$ in $S$. 
\end{proof}

\section{Macroscopic Dimension and Topology}\label{top-sec}
In this section, we prove Theorem \ref{topcor}. The following result seems to be known to experts:

\begin{thm}\label{virtfree-thm}
    Suppose $M$ is a closed Riemannian manifold with $\dim_{mc}(\widetilde{M})\leq 1$. Then $\pi_1(M)$ is virtually free.
\end{thm}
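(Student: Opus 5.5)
The plan is to show that $\pi_1(M)$ has asymptotic dimension at most $1$, and then invoke the theorem of Gentimis (for finitely generated groups), or Fujiwara--Whyte (for finitely presented ones), that such a group is virtually free. Since $M$ is closed, $\pi_1(M)$ acts properly and cocompactly by isometries on $\widetilde{M}$. By the \v{S}varc--Milnor lemma, $\pi_1(M)$ with a word metric is quasi-isometric to $\widetilde{M}$. Asymptotic dimension is a quasi-isometry invariant, so it suffices to show $\operatorname{asdim}(\widetilde{M}) \leq 1$.

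Fix a map $f:\widetilde{M}\to Y$ to a $1$-dimensional complex whose fibers have diameter at most $D<\infty$. First I would normalize $f$, replacing $Y$ by the quotient of $\widetilde{M}$ by connected components of fibers (a Reeb-type graph, as in the proof of Proposition \ref{ABG-prop}). This keeps the fiber bound and makes $f$ surjective with connected fibers. The goal is then the following: for every $R>0$, produce a cover of $\widetilde{M}$ by sets of uniformly bounded diameter, with Lebesgue number at least $R$ and multiplicity at most $2$. The natural candidate is to cover $Y$ by open stars of vertices in a sufficiently fine subdivision, which has multiplicity $2$ because $Y$ is $1$-dimensional, and pull this cover back by $f$. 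To get Lebesgue number $R$, I would first apply the construction to the $R$-neighbourhoods of fibers, or equivalently enlarge each pulled-back set by $R$. The thickened fibers then have diameter at most $D+2R$. The multiplicity has to be re-controlled by a coloring argument: group the stars into two families of pairwise far-apart sets, using the tree-like structure of $Y$ to separate them.

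The main obstacle is uniformity. The preimage of a small open set of $Y$ need not have bounded diameter, because $f$ has no continuity modulus relative to the metric on $Y$. Likewise, the $R$-thickening of a fiber can meet unboundedly many far-apart parts of $Y$. My first attempt is to exploit the cocompact action. Bounded geometry of $\widetilde{M}$ lets us discretize: take a $1$-net $\Lambda\subset\widetilde{M}$ and replace $f$ by its restriction to $\Lambda$. Then connect two net points when they are at distance at most $R$, and study how the induced graph maps to $Y$. The key claim to establish is that if $f(x)$ and $f(y)$ are separated in $Y$ by the image of a single fiber $F$, then every path from $x$ to $y$ in $\widetilde{M}$ meets $F$. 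Such cut-fibers are sets of diameter at most $D$ that separate $\widetilde{M}$. Organizing them gives a bottleneck property in the sense of Manning: for $x,y\in\widetilde{M}$ and a point $m$ near the middle of a geodesic between them, every path from $x$ to $y$ passes within a uniform distance $\Delta(D)$ of $m$.

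If the bottleneck property can be verified, Manning's criterion shows that $\widetilde{M}$ is quasi-isometric to a simplicial tree. Then $\pi_1(M)$ is quasi-isometric to a tree, and hence virtually free: by Stallings' ends theorem together with Dunwoody accessibility for the finitely presented group $\pi_1(M)$, or directly via $\operatorname{asdim}\leq 1$ and Gentimis. So the order of steps is: \v{S}varc--Milnor reduction; Reeb normalization of $f$; separation by fibers giving the bottleneck property (the hard step); Manning's criterion; the virtually-free conclusion.
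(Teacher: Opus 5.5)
\paragraph{Gap: the tree structure of $Y$.}
Your outline (\v{S}varc--Milnor, then Manning's bottleneck criterion to show $\widetilde{M}$ is quasi-isometric to a tree, then ``groups quasi-isometric to trees are virtually free'') is a legitimate alternative to the paper's route. However, the step you call the hard one is exactly where the proof is missing, and the idea needed there is absent. You never use that $\widetilde{M}$ is simply connected, and the statement is false without it.

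For a counterexample, let $\Sigma_2=\partial H$ with $H$ a genus-$2$ handlebody, and take the cover of $\Sigma_2$ induced by $\pi_1(\Sigma_2)\to\pi_1(H)=F_2\to\mathbb{Z}^2$. This cover is the boundary of a thickened square grid. It maps to the grid with uniformly bounded fibers, and the deck group $\mathbb{Z}^2$ acts cocompactly, yet $\mathbb{Z}^2$ is not virtually free. Here the Reeb quotient $Y$ is the grid, which has no cut points. So your ``cut-fiber'' claim, while trivially true (a path from $x$ to $y$ projects to a path from $f(x)$ to $f(y)$ in $Y$), is vacuous, and no organization of fibers can produce a bottleneck.

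What you actually must prove is that $Y$ is a tree. Your passing appeal to ``the tree-like structure of $Y$'' assumes this. Also, for a merely continuous $f$ the quotient by fiber components need not be a graph at all. You should first pass to a PL map, as in the preliminary lemma of Section~\ref{top-sec}.

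\paragraph{How to close it.}
Once $f$ is PL, the Reeb quotient $q\colon\widetilde{M}\to Y$ is a graph. It is a tree because $\widetilde{M}$ is simply connected. Suppose a generic point $z$ in an edge of $Y$ lay on a cycle. Using connectedness of the fibers, that cycle lifts to a loop in $\widetilde{M}$ meeting the closed hypersurface $q^{-1}(z)$ transversally in exactly one point. This gives nonzero $\mathbb{Z}_2$-intersection number with a null-homologous loop, a contradiction. It is the same mechanism as the contractible loop $\tau$ in the paper's proof of Proposition~\ref{ends-prop}.

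With $Y$ a tree, the bottleneck property is short. Given $x,y$, let $\sigma$ be a minimizing geodesic between them with midpoint $m$, and let $p$ be the median of $q(x),q(y),q(m)$ in $Y$. Each half of $\sigma$ has $q$-image containing an arc through $p$, so each half meets $q^{-1}(p)$. That set has diameter at most $D$, so $d(m,q^{-1}(p))\le D$. Every path from $x$ to $y$ also meets $q^{-1}(p)$, since $p$ lies on the arc from $q(x)$ to $q(y)$. Hence $\Delta=2D$ works, and the rest of your outline goes through; the asymptotic-dimension covering discussion and the net discretization can be dropped.

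\paragraph{Comparison with the paper.}
Repaired this way, your argument gives the stronger conclusion that $\widetilde{M}$ itself is quasi-isometric to a tree. It bypasses the paper's one-ended cover $K_2$, the line $\gamma_2$, and Lemmas~\ref{propemb}--\ref{distbound}. The cost is citing Manning's criterion and the quasi-isometric characterization of virtually free groups, which is itself proved via Stallings and Dunwoody; the paper instead applies Dunwoody directly together with Scott--Wall.
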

This is a stronger version of \cite[Corollary 14]{chodosh2023classifying}. To prove it, we will use some geometric group theory. The \emph{number of ends of a group $G$} is the number of ends of $K$ for any covering map $K \to K_0$ into a compact space $K_0$ which has $G$ as its group of deck transformations. If $G$ is finitely generated, this is equal to the number of ends of any Cayley graph of $G$. Recall that the number of ends of a topological space $K$ is the minimal number of connected components of $K\setminus C$ over all compact sets $C \subseteq K$.

We first need a preliminary lemma.
\begin{lem}
    If $\widetilde{M}$ is the universal cover of a Riemannian manifold which has $\UW_1(\widetilde{M}) < w$, then there exists a piecewise-linear map $f: \widetilde{M} \to P$ into a graph such that $\diam f^{-1}(z) < w$ for all fibers $f^{-1}(z)$.
\end{lem}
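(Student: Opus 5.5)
We start from a map $g:\widetilde{M}\to Y$ into some $1$-dimensional complex with width less than $w$, which exists because $\UW_1(\widetilde{M})<w$. There is slack: the width of $g$ is some $w'<w$. The plan is to replace $g$ by a map that factors through the nerve of a suitable open cover of $\widetilde{M}$, so that it becomes piecewise linear onto a graph while the fibers stay small.

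\textbf{Step 1: build the cover.} Fix $\epsilon>0$ with $w'+4\epsilon<w$. Since $g$ need not be PL and $Y$ may be an arbitrary graph, first subdivide $Y$ finely. For each point $y\in Y$, the fiber $g^{-1}(y)$ is closed and has diameter at most $w'$.

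By upper semicontinuity of fibers, there is an open neighbourhood $U_y$ of $y$ such that $g^{-1}(U_y)$ lies in the $\epsilon$-neighbourhood of $g^{-1}(y)$. This uses properness: a standard argument applies on compact pieces, and we can work with a locally finite exhaustion since $\widetilde{M}$ is complete and locally compact. Hence $g^{-1}(U_y)$ has diameter less than $w'+2\epsilon$.

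Take a subdivision of $Y$ whose open stars $\mathrm{St}(v)$ each lie in some $U_y$. This works locally finitely, and the mesh is allowed to vary over $Y$. The sets $V_v=g^{-1}(\mathrm{St}(v))$ then form an open cover of $\widetilde{M}$. It has multiplicity at most $2$, because open vertex stars in a graph overlap at most pairwise. Each $V_v$ has diameter less than $w'+2\epsilon$.

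\textbf{Step 2: map to the nerve.} The nerve of this cover is again a graph $P$. Triangulate $\widetilde{M}$ smoothly, subdivide it finely, and replace each $V_v$ by a slightly shrunken subcomplex-adapted open set. We need the new sets still to cover $\widetilde{M}$, to have nerve contained in $P$, and to have diameter below $w'+3\epsilon$.

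Choose a partition of unity $\{\phi_v\}$ subordinate to this cover. Take each $\phi_v$ PL with respect to the triangulation, for example by defining it on vertices and extending linearly on each simplex; this is possible after sufficient subdivision. The map $f=\sum_v \phi_v\, v:\widetilde{M}\to P$ is then PL.

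\textbf{Step 3: check the fibers.} Any point $z\in P$ lies in the closed star of some vertex $v$ with $\phi_v>0$ on $f^{-1}(z)$. Choose $v$ to be a vertex of the carrier of $z$. Then $f^{-1}(z)\subset V_v$, so $\diam f^{-1}(z)<w'+3\epsilon<w$.

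\textbf{Main obstacle.} The hard part is Step 1, producing neighbourhoods $U_y$ whose preimages stay close to $g^{-1}(y)$ when $\widetilde{M}$ is noncompact and $g$ is merely continuous. If properness of $g$ fails, I would instead start from the cover $\{g^{-1}(\mathrm{St}(v))\}$ for a fine subdivision and bound diameters directly. For this, note that $g^{-1}(\mathrm{St}(v))$ is a union of fibers over a small arc, and use continuity of $g$ together with a compactness argument on each ball to show these unions have diameter close to $w'$. If needed, the subdivision can be refined further over regions where this fails.
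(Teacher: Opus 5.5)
Your argument is essentially the paper's. Both subdivide the target graph so that preimages of open vertex stars have diameter $<w$; as you note, this needs $g$ to be proper, which the paper simply assumes. Both then replace $g$ by a PL map whose fibers lie in such preimages: your PL partition of unity subordinate to $\{g^{-1}(\mathrm{St}(v))\}$ is a hand-built locally finite simplicial approximation, and your $f(p)$ lies in the carrier of $g(p)$, which is exactly the property the paper uses.

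One caution: your fallback for non-proper $g$ would not work as stated. For example, $\arctan\colon\mathbb{R}\to[-\pi/2,\pi/2]$ has point fibers, yet under every subdivision the open star of an endpoint vertex has unbounded preimage, so properness has to be built into the setup rather than recovered by refining $Y$.
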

\begin{proof}
    We follow the proof of Lemma 3.9 from \cite{balitskiy2022waist}. Let $g:\widetilde{M} \to P$ be a continuous, proper map with diameter of fibers less than $w$. Subdivide $P$ finely so that the preimage of the open star of each vertex has diameter less than $w$ (using local finiteness of $P$, which we have without loss of generality by the symmetry of $\widetilde{M}$). Use the locally finite simplicial approximation theorem (see \cite[pg. 128]{spanier2012algebraic}) to construct a piecewise-linear map $f: \widetilde{M} \to P$ such that for each $x \in \widetilde{M}$, $f(x)$ belongs to the minimal closed cell in $P$ containing $g(x)$. This implies each fiber is contained in the open star of some vertex, hence has diameter less than $w$.
\end{proof}

\begin{prop}\label{ends-prop}
    Suppose $M$ is a closed Riemannian manifold with $\dim_{mc}(\widetilde{M})\leq 1$. Then any finitely generated subgroup of $\pi_1(M)$ cannot have one end.
\end{prop}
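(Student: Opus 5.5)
Let $H \le \pi_1(M)$ be a finitely generated subgroup, and suppose for contradiction that $H$ has exactly one end. The plan is to realize $H$ geometrically inside $\widetilde{M}$ and use the graph map from the preceding lemma to cut a large piece of $H$ into at least two unbounded parts by removing a bounded set.

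\textbf{Setup.} Fix a basepoint $\tilde x_0 \in \widetilde{M}$ and a finite generating set $T$ of $H$. The orbit map $\phi: H \to \widetilde{M}$, $h \mapsto h\tilde x_0$, is coarsely Lipschitz: consecutive vertices of the Cayley graph $\mathrm{Cay}(H,T)$ go to points at distance at most $L := \max_{s \in T} d(\tilde x_0, s\tilde x_0)$. Extend $\phi$ to the whole Cayley graph by sending each edge to a geodesic of length at most $L$. The map need not be coarsely injective in a metric sense, since $H$ may be distorted in $\pi_1(M)$. It is, however, proper: the action of $\pi_1(M)$ on $\widetilde{M}$ is properly discontinuous, so only finitely many $h$ satisfy $h\tilde x_0 \in B(\tilde x_0, R)$ for any $R$.

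\textbf{Key step.} By hypothesis $\UW_1(\widetilde{M}) < w$ for some $w$. The preceding lemma then gives a piecewise-linear map $f: \widetilde{M} \to P$ to a (locally finite) graph whose fibers have diameter less than $w$. Consider the composition $F = f \circ \phi : \mathrm{Cay}(H,T) \to P$. Two facts should follow:
\begin{itemize}
\item $F$ is proper, since fibers of $f$ are bounded and $\phi$ is proper.
\item $F$ is coarsely Lipschitz with respect to the combinatorial metric on $P$, after subdividing $P$ so that stars have preimages of diameter less than $w$. This is uniform because points at distance at most $L$ in $\widetilde{M}$ map to within a bounded combinatorial distance in $P$, by local finiteness and the cocompact symmetry.
\end{itemize}

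\textbf{Finding the separation.} Pick a point $z$ in the interior of an edge of $P$ whose preimage in $\mathrm{Cay}(H,T)$ is nonempty. Removing $z$ from $P$ separates the edge locally. I would rather argue via a graph-theoretic cut: since $P$ is a graph, take a vertex or edge-point $z$ such that $P \setminus \{z\}$ has at least two components $P_1, P_2$, each containing images of infinitely many elements of $H$. The preimage $F^{-1}(\text{small neighbourhood of } z)$ is compact by properness. Removing it from the Cayley graph leaves a set whose points map into different components of $P \setminus \{z\}$, and the coarse Lipschitz bound prevents Cayley-graph edges from jumping across once the neighbourhood is thickened by a bounded amount. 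This yields at least two infinite components, contradicting one end.

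\textbf{Main obstacle.} The hardest step is guaranteeing such a $z$ exists. If every point of $P$ failed to separate the image of $H$ into two infinite pieces, the image of $H$ would essentially lie along a single ray in $P$. I would handle this using the $H$-action: $H$ acts on $\widetilde{M}$, so translates of the fiber decomposition are available. Concretely, the image of an infinite $H$-orbit in a graph with a proper coarse map must be unbounded, so it has at least one end. One end of a graph image is compatible with one end of $H$, so the ray case must be ruled out differently. Here I would use that $H$ is infinite and acts by isometries: for $h \in H$ of large word length, both $\tilde x_0$ and $h^{-1}\tilde x_0$ lie on the orbit. Pulling $f$ back by $h$ then shows the orbit lies on a ray in both directions from a point $F(h)$ deep in the ray. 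This would make $F^{-1}$ of a point near $F(h)$ a compact set separating $e$ from elements farther out, giving two infinite sides and hence the contradiction. Finite $H$ has zero ends, so that case is vacuous.
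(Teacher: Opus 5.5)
\textbf{There is a genuine gap here, and it is structural.} Your argument never uses that $\widetilde{M}$ is simply connected, and without that hypothesis the statement is false.

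Let $\Sigma\subset\mathbb{R}^3$ be the boundary of a thin, translation-invariant tubular neighbourhood of the integer grid in $\mathbb{R}^2\times\{0\}$. Then $\mathbb{Z}^2$ acts on $\Sigma$ properly, cocompactly and isometrically, with quotient a genus-$2$ surface. Nearest-point projection $\Sigma\to\text{grid}$ has uniformly bounded fibers, so $\UW_1(\Sigma)<\infty$. Yet $\mathbb{Z}^2$ is one-ended.

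Every step you write (orbit map, properness, the composite $F$, the appeal to cocompactness) applies verbatim to this example. There $F$ is essentially the identity of the grid, and the grid has no cut point at all. So the ``main obstacle'' you identify is exactly where the missing hypothesis has to enter, not a technicality. Your proposed fix does not supply it:
\begin{itemize}
\item $f$ is not $H$-equivariant, so $f\circ h$ is an unrelated graph map and says nothing about where $F$ sends the orbit.
\item If $F(H)$ runs out along a single ray, then $F^{-1}(z)$ separates a finite set from an infinite one, which is perfectly compatible with one end.
\end{itemize}

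\textbf{The paper's proof uses two ingredients you are missing.}
\begin{enumerate}
\item \emph{Simple connectivity, through intersection numbers.} Take a line $\tilde\gamma$ (in $K_2$; a bi-infinite geodesic in your Cayley graph would play the same role) and set $C_r=f^{-1}(B_r(z))$. The first entry point $\tilde\gamma(t_-(r))$ and the last exit point $\tilde\gamma(t_+(r))$ must lie in the same fiber of $f$. Otherwise, close the path up outside $C_r$ using one-endedness; this gives a loop in $\widetilde{M}$ with algebraic intersection number one with a compact fiber, impossible in a simply connected space.
\item \emph{Coarse injectivity of the orbit map.} You dismiss this, but the orbit map is a coarse embedding even when $H$ is distorted. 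If $d(h\tilde x_0,h'\tilde x_0)\le r$, then $h^{-1}h'$ lies in the finite set of elements moving $\tilde x_0$ by at most $r$, so $|h^{-1}h'|_T\le R(r)$. This is exactly Lemma \ref{distbound}.
\end{enumerate}
Combining the two: $\tilde\gamma(t_\pm(r))$ are within $w$ in $\widetilde{M}$, hence within $R(w)$ intrinsically, while $|t_+(r)-t_-(r)|\to\infty$. That is the contradiction.

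Two smaller points. Bounded fibers do not make $f$ proper. The coarse-Lipschitz claim for $F$ via ``cocompact symmetry'' is unjustified, since $f$ is not equivariant. Neither is actually needed for your separation step, because $F$ is continuous and $F^{-1}(z)$ is compact.
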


This result is stronger than \cite[Corollary 14]{chodosh2023classifying}, but uses a similar proof.

\begin{proof}
    Suppose for the sake of contradiction that there was a finitely generated subgroup $G$ of $\pi_1(M)$ with one end. Let $M_1 \to M$ be a covering of $M$ with $\pi_1(M_1) \approx G$. Since $G$ is finitely generated, we can find a compact set $K_1$ in $M_1$ with the inclusion map $i_1: K_1 \to M_1$ inducing a surjective map on fundamental groups. That is, $(i_1)_*: \pi_1(K_1) \to \pi_1(M_1) = G$ is surjective, and therefore $G \approx \pi_1(K_1) / \ker(i_1)_*$. Find a covering $p_2: K_2 \to K_1$ with $\pi_1(K_2) \approx \ker(i_1)_*$. This means that $(p_2)_*(\pi_1(K_2))$ is normal in $\pi_1(K_1)$, so the group of deck transformations of $K_2 \to K_1$ is $\pi_1(K_1) / \pi_1(K_2) \approx   \pi_1(K_1) / \ker(i_1)_* \approx G$. Since $K_1$ is compact, and $G$ has one end, this all implies $K_2$ has one end. Since $(i_1)_*\circ(p_2)_*$ is trivial, we can lift this map to a map $i_2: K_2 \to \widetilde{M}$, where $\widetilde{M}$ is the universal cover of $M$. Therefore, we have the following diagram:
    $$
    \begin{array}{ccc}
        K_2 & \rightarrow &\widetilde{M}\\
        \downarrow{} & & \downarrow{}\\
        K_1 & \rightarrow & M_1\\
        & & \downarrow{}\\
        & & M
    \end{array}
    $$
    
We will need the following lemmas, which are identical to Lemmas 15, 16, and 17 in \cite{chodosh2023classifying} (and have identical proofs).    \begin{lem}\label{propemb}
        The map $i_2$ is a proper embedding.
    \end{lem}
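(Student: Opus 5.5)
The plan is to identify $K_2$ with a connected component of the full preimage of $K_1$ in $\widetilde{M}$. Once that identification is made, $i_2$ becomes the inclusion of a closed subset, and closed inclusions are automatically proper embeddings.

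\emph{Step 1: regularise $K_1$.} The proof of Proposition \ref{ends-prop} only requires $K_1$ to be compact with $(i_1)_*$ surjective. So I will choose $K_1$ to be a compact connected codimension-$0$ submanifold with boundary of $M_1$. Concretely, take finitely many loops at a basepoint representing generators of $G$, and let $K_1$ be a small closed regular neighbourhood of their union. Then $K_1$ is locally path-connected and semi-locally simply connected, which is what covering space theory needs. It is also closed in $M_1$.

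\emph{Step 2: identify the cover.} Let $q:\widetilde{M}\to M_1$ be the universal covering; this is the map $\widetilde{M}\to M_1$ in the diagram. Then $q^{-1}(K_1)\to K_1$ is a covering map. Let $K'$ be the component of $q^{-1}(K_1)$ containing the lift $\tilde x_0$ of the basepoint. Since $\widetilde{M}$ is simply connected, a loop in $K_1$ lifts to a closed loop in $K'$ exactly when it is trivial in $\pi_1(M_1)$. Hence the image of $\pi_1(K')$ in $\pi_1(K_1)$ is $\ker(i_1)_*$. By the classification of coverings there is a homeomorphism $\phi:K_2\to K'$ commuting with the projections to $K_1$, and we may arrange that it sends the chosen basepoints to each other.

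\emph{Step 3: compare with $i_2$.} Both $i_2$ and $\iota\circ\phi$ are lifts through $q$ of the same map $i_1\circ p_2:K_2\to M_1$, where $\iota:K'\hookrightarrow\widetilde{M}$ is the inclusion. If they agree at one point, then by uniqueness of lifts on the connected space $K_2$ we get $i_2=\iota\circ\phi$. Otherwise the two lifts differ by a deck transformation of $\widetilde{M}$. In either case $i_2$ is a homeomorphism onto a component $K''$ of $q^{-1}(K_1)$.

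\emph{Step 4: closedness and properness.} $q^{-1}(K_1)$ is closed in $\widetilde{M}$ and is a manifold with boundary, so it is locally connected. Its components are therefore open in it, hence also closed in it, hence closed in $\widetilde{M}$. The inclusion of a closed subset is proper: a compact set $C\subset\widetilde{M}$ meets $K''$ in the closed subset $C\cap K''$ of $C$, which is compact. Composing with the homeomorphism $K_2\cong K''$ shows $i_2$ is a proper embedding.

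I expect the only real subtlety to be Step 1 together with the local connectivity used in Step 4. For an arbitrary compact $K_1$, components of the preimage need not be closed, and coverings of $K_1$ may fail to exist, so the choice of a nice $K_1$ must be made explicitly. This choice does not affect the rest of the argument in Proposition \ref{ends-prop}.
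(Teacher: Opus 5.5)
Your argument is correct. The paper gives no proof of this lemma; it only says the proof is identical to Lemma 15 of \cite{chodosh2023classifying}. So there is no in-text argument to match, and your write-up is a complete, self-contained proof.

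\textbf{Your route.} You use the classification of coverings to identify $K_2$, over $K_1$, with a component of $q^{-1}(K_1)$. Injectivity, being a homeomorphism onto the image, and properness then all follow at once from that component being closed in $\widetilde M$.

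\textbf{A more hands-on alternative.} One can avoid the classification theorem. For injectivity, take a path $\sigma$ in $K_2$ joining two points with the same image under $i_2$. Then $p_2\circ\sigma$ is a loop in $K_1$. Since $i_2\circ\sigma$ is a closed loop in the simply connected $\widetilde M$, the loop $p_2\circ\sigma$ is null-homotopic in $M_1$. Hence it lies in $\ker(i_1)_*=(p_2)_*\pi_1(K_2)$, so its lift $\sigma$ is closed. For properness, note that $i_2$ is equivariant for the deck actions of $G$, which is cocompact on $K_2$ and properly discontinuous on $\widetilde M$. Finally, an injective proper map into a locally compact Hausdorff space is automatically a closed embedding.

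\textbf{Comparison.} Your version gives the extra information that $i_2(K_2)$ is exactly a component of $q^{-1}(K_1)$. The direct version uses less machinery. Either way, your Step 1 is genuinely needed, since for an arbitrary compact $K_1$ the covering $K_2$ need not even exist.

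\textbf{One correction to your closing remark.} Connected components of any topological space are closed, so the closedness of $K''$ in Step 4 does not use local connectedness at all. Where the regularity of $K_1$ really matters is Step 2 and the existence of $K_2$. For a badly behaved $K_1$, components of $q^{-1}(K_1)$ need not be open in it, need not be covering spaces of $K_1$, and the classification of coverings need not apply.
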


    \begin{lem}\label{distbound}
        For each $r>0$, there exists $R(r)>0$ such that, for any $a,b \in K_2$ with $d_{\widetilde{M}}(a,b) \leq r$, we have $d_{K_2}(a,b) \leq R(r)$.
    \end{lem}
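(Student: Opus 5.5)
The plan is a compactness argument that uses the $G$-equivariance of $i_2$ together with the properness from Lemma \ref{propemb}. Throughout, $K_1$ is taken to be connected (we may always choose it so), so $K_2$ is connected. We give $K_2$ the length metric lifted from a metric on $K_1$, and we compare it with the metric on $\widetilde M$ via $i_2$, implicitly writing $a$ for $i_2(a)$.

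\textbf{Step 1: equivariance.} The deck group of $K_2 \to K_1$ is $G \cong \pi_1(K_1)/\ker(i_1)_*$, and $G$ also acts on $\widetilde M$ as a subgroup of $\pi_1(M)$ by isometries. I will check that $i_2$ can be chosen $G$-equivariant: $i_2(g\cdot a) = g\cdot i_2(a)$. This holds because $i_2$ is a lift of $i_1\circ p_2$. Tracing a path in $K_1$ representing an element $\gamma\in\pi_1(K_1)$, its lift in $K_2$ moves by the deck transformation $[\gamma]\in G$. Its image in $M_1$ represents $(i_1)_*\gamma$, and the lift of that image to $\widetilde M$ moves by the same element of $G$. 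Consequently both $d_{K_2}$ and $d_{\widetilde M}(i_2(\cdot),i_2(\cdot))$ are $G$-invariant.

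\textbf{Step 2: reduction to a compact fundamental domain.} Since $K_1$ is compact, there is a compact set $D\subset K_2$ with $G\cdot D = K_2$; for instance, take a lift of a finite cover of $K_1$ by evenly covered closed sets. Given $a,b$ with $d_{\widetilde M}(a,b)\le r$, translate both by some $g\in G$ so that $a\in D$. By Step 1, neither distance changes. Hence it suffices to bound $d_{K_2}(a,b)$ for $a\in D$ and
\[
b\in E_r := i_2^{-1}\bigl(\overline{B}_r(i_2(D))\bigr).
\]

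\textbf{Step 3: compactness.} The set $i_2(D)$ is compact, so its closed $r$-neighbourhood in the complete manifold $\widetilde M$ is compact. By Lemma \ref{propemb}, $i_2$ is proper, so $E_r$ is compact. Since $K_2$ is connected and $d_{K_2}$ is continuous, $d_{K_2}$ is bounded on the compact set $D\times E_r$. We set $R(r)$ to be this bound, which proves the lemma.

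\textbf{Main obstacle.} The only delicate point is Step 1: confirming that the deck action on $K_2$ and the action of $G\le\pi_1(M)$ on $\widetilde M$ are intertwined by $i_2$ through the same identification of $G$. This is a basepoint-chasing exercise with the lifting criterion. Once it is settled, the rest is routine compactness.
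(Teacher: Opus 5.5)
Your proof is correct and follows the standard argument for this lemma. The paper gives no proof of its own here and defers to Lemma 16 of Chodosh--Li--Liokumovich: the deck group acts isometrically on both $K_2$ and $\widetilde M$, compatibly via $i_2$, so you can move $a$ into a compact fundamental domain; properness of $i_2$ (Lemma \ref{propemb}) then confines $b$ to a compact subset of $K_2$, on which $d_{K_2}$ is bounded.

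The step you flagged as delicate is also fine, and in fact automatic. For a deck transformation $\phi$ of $K_2$, the map $i_2\circ\phi$ is another lift of $i_1\circ p_2$. Since $K_2$ is connected, two such lifts differ by a deck transformation of $\widetilde M\to M_1$, so any lift $i_2$ is equivariant.
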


    \begin{lem}
        There exists a line $\gamma_2$ in $K_2$.
    \end{lem}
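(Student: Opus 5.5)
The plan is the standard cocompactness argument for producing lines, as in \cite[Lemma 17]{chodosh2023classifying}. First I fix the setup. Choose $K_1$ to be a compact connected smooth submanifold of $M_1$ (with boundary), or a finite complex, equipped with its induced length metric. It is then a compact length space. Give $K_2$ the lifted length metric, so that $p_2: K_2 \to K_1$ is a local isometry and the deck group $G$ acts on $K_2$ by isometries with compact quotient $K_1$. Because $K_1$ is compact and the covering is a local isometry, $K_2$ is complete and locally compact. By the Hopf--Rinow--Cohn-Vossen theorem, $K_2$ is then proper and geodesic: closed balls are compact and any two points are joined by a minimizing path. Finally, $G$ is infinite, since a finite group has zero ends and $G$ has one end. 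Hence $K_2$ is noncompact, and therefore $\diam(K_2) = \infty$.

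Next I build long segments centred in a fixed compact set. Fix a compact set $D \subset K_2$ whose $G$-translates cover $K_2$; for instance, take a closed ball of radius larger than $\diam(K_1)$ about a basepoint. For each $n$, unboundedness gives points $a_n, b_n$ with $d_{K_2}(a_n, b_n) = 2n$; such a distance is attained because $K_2$ is connected and geodesic. Let $\sigma_n : [-n, n] \to K_2$ be a unit-speed minimizing geodesic from $a_n$ to $b_n$. Pick $g_n \in G$ with $g_n \sigma_n(0) \in D$. Since $g_n$ is an isometry, $g_n \circ \sigma_n$ is still minimizing. I rename it $\sigma_n$.

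The step requiring care is extracting the limit. The curves $\sigma_n$ are $1$-Lipschitz. For each fixed $T$, the restriction of $\sigma_n$ to $[-T,T]$ (for $n \ge T$) lies in the closed $T$-neighbourhood of $D$, which is compact by properness. By Arzel\`a--Ascoli and a diagonal argument, a subsequence converges uniformly on compact sets to a $1$-Lipschitz curve $\gamma_2 : \mathbb{R} \to K_2$. Minimality passes to the limit: $d(\sigma_n(s), \sigma_n(t)) = |s - t|$ for all $n$ large, and by continuity of the distance $d(\gamma_2(s), \gamma_2(t)) = |s - t|$ for all $s, t$. Thus $\gamma_2$ is a line in $K_2$.

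The only potential obstacle is ensuring that $K_2$ is a proper geodesic space, so that compactness is available. This is exactly why $K_1$ should be chosen as a compact length space (a compact submanifold or a finite complex), rather than an arbitrary compact subset. With that choice the argument is routine.
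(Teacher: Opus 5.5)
Your argument is correct and is essentially the proof the paper relies on: the paper defers to Lemma 17 of \cite{chodosh2023classifying}, which is this standard argument. One translates longer and longer minimizing segments into a fixed compact set by deck transformations and extracts a limiting line via Arzel\`a--Ascoli. Your explicit choice of $K_1$ as a compact length space, so that $K_2$ is proper and geodesic, is a harmless refinement of a detail the paper leaves implicit.
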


    Parameterize the curve $\gamma_2$ so that $d_{K_2}(\gamma_2(s),\gamma_2(t)) = |s-t|$ (note that $d_{\widetilde{M}}(\gamma_2(s),\gamma_2(t))$ may be less than $|s-t|$). Let $\tilde{\gamma} = i_2 \circ \gamma_2$. Note that $\gamma_2$ is automatically proper in $K_2$, and thus Lemma \ref{propemb} tells us that $\tilde{\gamma}$ is proper in $\widetilde{M}$.
    
    Let $f: \widetilde{M} \to P$ be a piecewise-linear map from the universal cover $\widetilde{M}$ to a graph $P$ with width less than $w$ for some fixed $w$. Let $z := f(\tilde{\gamma}(0)) \in P$, and fix a path metric $d_P$ on $P$ so that we can consider closed balls $B_r(z)$ in $P$ centered at $z$. Define the parameters
    $$
    \begin{array}{l}
        t_-(r) := \max\{t: f(\tilde{\gamma}(-\infty,t)) \cap B_r(z) = \emptyset\},\\
        t_+(r) := \min \{t: f(\tilde{\gamma}(t,\infty)) \cap B_r(z) = \emptyset \}.
    \end{array}
    $$
    Note that $t_{\pm}(r) \to \pm \infty$ as $r \to \infty$, since the balls $B_r(z)$ exhaust the component of $P$ which contains the image of $\tilde{\gamma}(\mathbb{R})$ under $f$.

    Let $C_r := f^{-1}(B_r(z))$. Since $\partial B_r(z)$ is a finite collection of points in $P$, $\partial C_r$ is a disjoint union of fibers of $f$, and by piecewise-linear Sard's lemma, take generic $r$ so that these fibers are $n-1$-dimensional piecewise-linear submanifolds. We claim that both $\widetilde{\gamma}(t_{\pm}(r))$ must lie in the same fiber in $\partial C_r$. Suppose this were not the case. Take the path $\widetilde{\gamma}([t_-(r),t_+(r)])$, and using the fact $K_2$ has one end, connect the endpoints $\tilde{\gamma}(t_{\pm}(r))$ of this path with another path in $K_2 \setminus C_r$ such that we create a closed loop $\tau$ which transversely intersects the two fibers containing $\widetilde{\gamma}(t_{\pm}(r))$. Then $\tau$ would have algebraic intersection number one with each of these fibers, but this contradicts that the loop $\tau$ is contractible in the universal cover $\widetilde{M}$. This proves our claim that $\widetilde{\gamma}(t_\pm(r))$ both lie in the same fiber of $f$. Hence,
    $$
    d_{\widetilde{M}}(\tilde{\gamma}(t_-(r)),\tilde{\gamma}(t_+(r))) < w.
    $$
    By Lemma \ref{distbound}, this implies
    $$
    d_{K_2}(\tilde{\gamma}(t_-(r)),\tilde{\gamma}(t_+(r))) \leq R(w).
    $$
    This holds for generic, sufficiently large $r$. On the other hand, we have
    $$
    d_{K_2}(\tilde{\gamma}(t_-(r)),\tilde{\gamma}(t_+(r))) = \left|t_-(r) - t_+(r) \right| \to \infty
    $$
    as $r \to \infty$. This gives our contradiction.
\end{proof}

This puts us in a position to prove Theorem \ref{virtfree-thm} and Theorem \ref{topcor}.
\begin{proof}[Proof of Theorem \ref{virtfree-thm}] Since the fundamental group of any closed manifold is finitely presented, Dunwoody's result in geometric group theory \cite{dunwoody1985accessibility} says that $\pi_1(M)$ is the fundamental group of a graph of groups, where each edge group is finite and each vertex group has at most one end (see \cite[Section 3]{scott1979topological} for the definition of graphs of groups and their fundamental groups). By Proposition \ref{ends-prop}, the vertex groups cannot have one end, and therefore they must be finite. Then \cite[Theorem 7.3]{scott1979topological} implies that $\pi_1(M)$ is virtually free.
    
\end{proof}

\begin{proof}[Proof of Theorem \ref{topcor}]
    By Theorem \ref{main-thm} and Theorem \ref{virtfree-thm}, $\pi_1(M)$ is virtually free. Let $G \subseteq \pi_1(M)$ be a finite index subgroup which is a free group. Consider the finite covering $\widehat M\xrightarrow{\hat p} M$ such that $\hat{p}_*(\pi_1(\widehat{M})) = G$. Then $\pi_1(\widehat M)$ is a finitely generated free group. Since $\pi_2(\widehat M)=\cdots=\pi_{n-2}(\widehat M)=0$, \cite[Section 2 and Section 3]{gadgil2009topology} implies that $\widehat M$ is homotopy equivalent to $S^n$ or connected sums of $S^{n-1}\times S^1$.
\end{proof}

\printbibliography

\end{document}